\author{Philip Boeken\thanks{Department of Mathematics, VU Amsterdam, \url{p.a.boeken@vu.nl}} \and Patrick Forr\'e\thanks{Korteweg-de Vries Institute for Mathematics, University of Amsterdam} \and Joris M.\ Mooij\footnotemark[2]}
\date{\today}
\title{Are Bayesian networks typically faithful?}
\begin{document}
\maketitle

\begin{abstract}
	Faithfulness is a common assumption in causal inference, often motivated by the fact that the faithful parameters of linear Gaussian and discrete Bayesian networks are typical, and the folklore belief that this should also hold for other classes of Bayesian networks. We address this open question by showing that among all Bayesian networks over a given DAG, the faithful Bayesian networks are indeed `typical': they constitute a dense, open set with respect to the total variation metric. This does not directly imply that faithfulness is typical in restricted classes of Bayesian networks that are often considered in statistical applications. To this end we consider the class of Bayesian networks parametrised by conditional exponential families, for which we show that under regularity conditions, the faithful parameters constitute a dense and open set, the unfaithful parameters have Lebesgue measure zero, and the induced faithful distributions are open and dense in the weak topology. This  extends the existing results for linear Gaussian and discrete Bayesian networks. We also show for nonparametric classes of Bayesian networks with uniformly equicontinuous and uniformly bounded conditional densities that the faithful Bayesian networks are open and dense in the weak topology. All these results also hold for Bayesian networks with latent variables, if faithfulness is only required to hold with respect to the latent projection. Finally, for the considered conditional exponential family parametrisations and nonparametric conditional density models, the topological properties of conditional independence imply the existence of a consistent conditional independence test. Together with the topological properties of faithfulness, this implies that sound constraint-based causal discovery algorithms like \emph{PC} and \emph{FCI} are consistent on an open and dense -- and hence `typical' -- set of Bayesian networks.
\end{abstract}

\section{Introduction}
Given a Bayesian network over a DAG $G$ with variables $V$ and a finite sample from its distribution $\PP(X_V)$, the task of \emph{causal discovery} algorithms is to infer the graph $G$ from the data. \emph{Constraint-based} causal discovery methods do so by testing for conditional independencies $X_A \Indep_\PP X_B \given X_C$ for various choices of $A, B, C \subseteq V$, and use this information to reconstruct $G$, up to certain equivalences. A core assumption of many constraint-based causal discovery algorithms is that a correctly inferred set of conditional independencies in $\PP(X_V)$ characterises the corresponding set of $d$-separations in $G$: for all subsets of vertices $A, B, C \subseteq V$ we have
\begin{equation*}
	A \perp_{G}^d B \given C \iff X_A\Indep_\PP X_B\given X_C.
\end{equation*}
Bayesian networks for which this condition holds are called \emph{faithful} \citep{pearl1988probabilistic,spirtes1993causation}. The implication from left to right holds for all Bayesian networks, and is called the \emph{global Markov property} \citep{verma1990causal}. The implication from right to left does not always hold: there exist Bayesian networks which have conditional independencies that are not due to a corresponding $d$-separation in the graph --- instead, they might be due to cancelling paths, deterministic variables, or deterministic relations (see Example \ref{ex:unfaithful} below).

In absence of any knowledge of the graph $G$, faithfulness is an untestable assumption \citep{zhang2008detection}. In practice, this assumption is often motivated by theoretical results that for certain parametric models, the faithful distributions are `typical'. For a given DAG $G$, \cite{spirtes1993causation} and \cite{meek1995strong} consider specific parametrisations $\Theta_\Ncal$ and $\Theta_\Dcal$ of linear Gaussian and discrete Bayesian networks respectively (which are subsets of $\RR^d$ for appropriate $d\in\NN$, see Examples \ref{ex:exp_gauss} and \ref{ex:exp_discrete} below in Section \ref{sec:exp}) and show that drawing the parameter values at random will give a faithful Bayesian network with probability one:
\begin{theorem}[\citealp{spirtes1993causation}]\label{thm:spirtes}
	With respect to Lebesgue measure over $\Theta_\Ncal$, the subset of parameter values whose distribution is unfaithful to $G$ is measure-zero.
\end{theorem}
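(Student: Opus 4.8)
The plan is to exhibit the set of unfaithful parameters as a finite union of proper real-algebraic subsets of $\Theta_\Ncal$ and then invoke that such subsets are Lebesgue-null. First I would record the polynomial structure of conditional independence in this model. A linear Gaussian Bayesian network over $G$ is determined by an edge coefficient for each edge $j\to i$ of $G$ together with a positive error variance $\sigma_i^2$ for each $i\in V$; collecting the coefficients into a matrix $B$ and the variances into $\Omega=\mathrm{diag}((\sigma_i^2)_{i\in V})$, the covariance matrix is $\Sigma(\theta)=(I-B)^{-1}\Omega(I-B)^{-\top}$. Since $G$ is acyclic, $B$ is nilpotent with respect to a topological ordering, so $(I-B)^{-1}=I+B+B^2+\cdots$ is a finite sum and every entry of $\Sigma(\theta)$ is a polynomial in $\theta$. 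For a jointly Gaussian law, $X_a\Indep_\PP X_b\given X_C$ holds iff $\sigma_{ab}-\Sigma_{aC}\Sigma_{CC}^{-1}\Sigma_{Cb}=0$; as $\Sigma_{CC}$ is positive definite, the Schur-complement identity makes this equivalent to the vanishing of $q_{a,b,C}(\theta):=\det\Sigma_{\{a\}\cup C,\,\{b\}\cup C}(\theta)$, itself a polynomial in $\theta$.

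Next I would reduce the statement to a genericity claim. The Markov property holds at every $\theta$, so a parameter is unfaithful precisely when $X_A\Indep_\PP X_B\given X_C$ holds for some $A,B,C\subseteq V$ with $A\not\perp_G^d B\given C$; since $\perp_G^d$ and, for Gaussian laws, $\Indep_\PP$ between sets hold iff they hold between all pairs of singletons, $\theta$ is unfaithful iff $q_{a,b,C}(\theta)=0$ for some triple $(a,b,C)$ with $a\not\perp_G^d b\given C$. There are finitely many such triples, so the unfaithful parameters form $\bigcup_{(a,b,C):\,a\not\perp_G^d b\given C}\{\theta\in\Theta_\Ncal:q_{a,b,C}(\theta)=0\}$. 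A finite union of Lebesgue-null sets is null, and the zero set of a polynomial that does not vanish identically on the nonempty open set $\Theta_\Ncal\subseteq\RR^d$ is null; hence it suffices to prove that whenever $a$ is not d-separated from $b$ given $C$ there is a parameter $\theta^\star\in\Theta_\Ncal$ with $q_{a,b,C}(\theta^\star)\neq0$, equivalently with $X_a\not\Indep_\PP X_b\given X_C$.

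The remaining genericity claim is, I expect, the main obstacle. Given such a triple, since $a$ is d-connected to $b$ given $C$ I would fix a path $\pi$ from $a$ to $b$ that is active given $C$, and for each collider $w$ on $\pi$ a directed path $\delta_w$ from $w$ into $C$ (trivial when $w\in C$); then assign to each edge lying on $\pi$ or on some $\delta_w$ its own free real parameter and set every remaining coefficient to $0$ and every variance to $1$. The resulting parameter lies in $\Theta_\Ncal$ for all sufficiently small values of the free parameters, so it suffices to show that $q_{a,b,C}$, as a polynomial in the free parameters, does not vanish identically: its zero set then has empty interior, and some point near the origin -- hence in $\Theta_\Ncal$ -- provides the desired $\theta^\star$. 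I would prove non-vanishing by expanding $q_{a,b,C}$ via the trek rule, which writes covariances (and, through the determinant, conditional covariances) as signed sums of monomials indexed by systems of treks in the subgraph formed by the selected edges, isolating the monomial contributed by the ``backbone'' trek assembled from $\pi$ and the $\delta_w$, and checking that no other trek system contributes that monomial, so that it survives. Correctly accounting for the colliders on $\pi$, whose contribution is switched on only by conditioning on their descendants in $C$, and ruling out accidental cancellations is the delicate part, and is essentially the content of the classical arguments of \cite{spirtes1993causation} and \cite{meek1995strong}. Granting it, the theorem follows from the reduction above.
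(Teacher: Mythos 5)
Your argument is correct, and it is essentially the classical route of \cite{spirtes1993causation} rather than the route this paper takes. You exploit the specific polynomial structure of the linear Gaussian model: $\Sigma(\theta)=(I-B)^{-1}\Omega(I-B)^{-\top}$ has polynomial entries because $B$ is nilpotent under a topological order, Gaussian conditional independence reduces (via the Schur complement) to the vanishing of the polynomial $\det\Sigma_{\{a\}\cup C,\{b\}\cup C}$, and the unfaithful set is a finite union of zero sets of polynomials, each Lebesgue-null once a witness parameter is exhibited by a trek-rule computation along an active path. The paper instead treats the linear Gaussian model as an instance of a conditional exponential family (Example \ref{ex:exp_gauss}): Theorem \ref{thm:exp_analytic} gives analyticity of the marginal densities $p_\theta(x_A)$ in $\theta$, Theorem \ref{thm:exp_faithful_typical} then places the unfaithful parameters inside a finite union of zero sets of nonconstant real-analytic functions (null and nowhere dense by the identity theorem), and the existence of at least one faithful parameter in $\Theta_\Ncal$ is simply cited from \cite{spirtes1993causation}, yielding Corollary \ref{thm:spirtes_nowhere_dense}. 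Your approach is more elementary and self-contained for this particular model (polynomials instead of general analytic functions, and a reduction from set-valued to pairwise conditional independences that is special to Gaussians), while the paper's approach buys generality, since the same argument covers discrete and other exponential-family parametrisations. The one step you leave as a sketch --- that the backbone monomial in the trek-system expansion of $\det\Sigma_{\{a\}\cup C,\{b\}\cup C}$ survives without cancellation for the chosen witness --- is not a wrong turn but is the genuinely delicate combinatorial core; it is precisely the part the paper also does not reprove but outsources to the original reference, so on that point your proposal is, if anything, more explicit than the paper's own treatment.
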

\begin{theorem}[\citealp{meek1995strong}]\label{thm:meek}
	With respect to Lebesgue measure over $\Theta_\Dcal$, the subset of parameter values whose distribution is unfaithful to $G$ is measure-zero.
\end{theorem}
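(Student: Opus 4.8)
The plan is to prove Theorem~\ref{thm:meek} by an algebraic argument, showing that the set of $\theta\in\Theta_\Dcal$ whose distribution $\PP_\theta$ is unfaithful to $G$ is a \emph{finite} union of proper algebraic subsets of $\Theta_\Dcal$, each of Lebesgue measure zero. The engine is that discrete Bayesian networks over $G$ form an algebraic family: writing $\theta\in\Theta_\Dcal$ for the conditional-probability-table entries $\theta_{v\mid x_{\mathrm{pa}(v)}}(x_v)$, every joint probability
\begin{equation*}
	\PP_\theta(X_V = x_V) \;=\; \prod_{v\in V}\theta_{v\mid x_{\mathrm{pa}(v)}}(x_v)
\end{equation*}
is a polynomial in $\theta$, hence so is every marginal $\PP_\theta(X_S = x_S)$, which I abbreviate $\PP_\theta(x_S)$. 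Consequently, for a fixed triple $A,B,C\subseteq V$, the conditional independence $X_A\Indep_{\PP_\theta}X_B\mid X_C$ holds iff all of the finitely many polynomials
\begin{equation*}
	\PP_\theta(x_A,x_B,x_C)\,\PP_\theta(x_C) \;-\; \PP_\theta(x_A,x_C)\,\PP_\theta(x_B,x_C),
\end{equation*}
one for each value $(x_A,x_B,x_C)$ of $(X_A,X_B,X_C)$, vanish at $\theta$. Writing $U_{A,B,C}\subseteq\Theta_\Dcal$ for the set of $\theta$ at which they all vanish: by the Markov property the left-to-right implication of faithfulness holds for every $\theta$, so $\PP_\theta$ is unfaithful to $G$ exactly when $\theta\in U_{A,B,C}$ for some triple with $A\not\perp^d_G B\mid C$; and as $V$ is finite there are only finitely many such triples. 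The unfaithful set is thus a finite union of such $U_{A,B,C}$, so it suffices to show each is Lebesgue-null.

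Fix a triple with $A\not\perp^d_G B\mid C$. As a product of probability simplices, $\Theta_\Dcal$ has nonempty relative interior and spans its affine hull, so a polynomial vanishing on $\Theta_\Dcal$ vanishes on that affine hull; hence $U_{A,B,C}$ is either all of $\Theta_\Dcal$ or a proper algebraic subset, which in the latter case has Lebesgue measure zero. It therefore suffices to exhibit a single $\theta^\star\in\Theta_\Dcal$ with $X_A\not\Indep_{\PP_{\theta^\star}}X_B\mid X_C$, i.e.\ one discrete Bayesian network over $G$ in which $X_A$ and $X_B$ are conditionally dependent given $X_C$. This is the \emph{completeness} of $d$-separation for the discrete model over $G$ --- that a $d$-connection is witnessed by a conditional dependence for at least one parameter --- which I would establish by the standard explicit construction: since $A\not\perp^d_G B\mid C$ there is a $d$-connecting path $\pi$ between some $a\in A$ and $b\in B$ given $C$; restrict attention to two designated states of each variable, make every variable off $\pi$ --- and off the directed paths linking the colliders on $\pi$ to their required descendants in $C$ --- independent of its parents, and along $\pi$ propagate dependence node by node: a non-collider on $\pi$ becomes an informative noisy copy of the neighbour from which information arrives (mixing parameter strictly inside $(0,1)$), while a collider on $\pi$ --- which, being on a $d$-connecting path, has a descendant in $C$ --- is set to the parity of its two path-neighbours, so that conditioning on that descendant couples them. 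Chaining these strictly informative binary channels produces a value $c$ of $X_C$ with $\mathrm{Cov}_{\PP_{\theta^\star}}(X_a,X_b\mid X_C = c)\neq 0$, so $X_A\not\Indep X_B\mid X_C$.

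The bookkeeping above --- polynomiality, the finiteness of the union, and the fact that a not-identically-zero polynomial vanishes on a measure-zero set --- is routine; the genuine obstacle is the construction of $\theta^\star$. Colliders must be handled with care, since the descendant in $C$ is indispensable, and when several active paths connect $A$ and $B$ one must preclude cancellation of their contributions. I would do this by activating a single path $\pi$ while blocking every competing path at some node made independent of its parents; if shared vertices obstruct this, one instead treats the mixing parameters as indeterminates and checks that the resulting conditional covariance is a nonzero rational function of them --- e.g.\ by degenerating those parameters to an explicitly computable, manifestly nonzero limit. Since only \emph{one} dependent network is needed, this shows each $U_{A,B,C}$ is Lebesgue-null and thereby proves Theorem~\ref{thm:meek}. (The same scheme --- with linear structural equations over $G$, and the edge coefficients and error variances in place of the conditional-probability tables as the free parameters --- proves Theorem~\ref{thm:spirtes}.)
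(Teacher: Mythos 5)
Your reduction is, in substance, the same one the paper uses for this theorem: the set of unfaithful parameters is a finite union, over $d$-connected triples $(A,B,C)$, of the common zero sets of the finitely many functions $\PP_\theta(x_A,x_B,x_C)\PP_\theta(x_C)-\PP_\theta(x_A,x_C)\PP_\theta(x_B,x_C)$, and each such zero set is Lebesgue-null provided a single parameter $\theta^\star$ witnessing the corresponding conditional dependence exists. The paper runs this argument slightly more generally (Theorems \ref{thm:exp_faithful_typical} and \ref{thm:exp_analytic}): it views the discrete model as a conditional exponential family, shows the marginals are real analytic in $\theta$, and invokes the identity theorem, which yields nowhere-denseness in addition to measure zero and covers the Gaussian case of Theorem \ref{thm:spirtes} by the same token. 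Your direct appeal to polynomiality of the marginals in the conditional-probability-table entries is a legitimate and more elementary specialisation; one small point of hygiene is that $\Theta_\Dcal$ is a product of simplices, so ``Lebesgue measure'' must be read on the affine hull (or one reparametrises by dropping one coordinate per simplex), which you implicitly do and which the paper handles via Remark \ref{rem:exp_faithful_typical_convex}.

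The one genuinely thin step is exactly the one the paper does \emph{not} reprove but outsources: the existence of $\theta^\star$. The paper cites \cite{meek1998graphical}, Lemma 3, for this, and pointedly notes that Meek's original 1995 argument for this very step was flawed (it assumed weak transitivity of binary distributions, which fails in general) --- so this is not routine bookkeeping. Your ``activate one path, noisy copies at non-colliders, parity at colliders, condition on a descendant'' sketch is the right construction, but the cancellation subtleties live precisely here: after conditioning on a noisy \emph{descendant} of a collider rather than on the collider itself, the induced association between the collider's parents is a signed mixture that must be checked not to vanish; and when the activated path, the collider-to-$C$ directed paths, and $C$ share vertices, one cannot always block every competing path at an off-path node. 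Your fallback --- treat the channel parameters as indeterminates and verify that the conditional covariance is a not-identically-zero rational function by degenerating to an explicitly computable limit --- is the correct repair, but as written it is a proof obligation rather than a proof. Either carry out that computation or cite \cite{meek1998graphical} for the witness; with that, your argument is complete.
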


To our knowledge, no such results are available for other parametric or nonparametric classes of distributions. In this work we prove such a result: without restriction to any parametric or nonparametric class of distributions, the faithful distributions are typical. As there is no canonical analogue of the Lebesgue measure for the nonparametric space of Bayesian networks, we don't consider the measure-theoretic notion of typicality, but instead consider a topological notion. Our most general nonparametric result, Theorem \ref{thm:bn_dist_nowhere_dense}, is as follows:

\begin{quote}\normalsize
	\textbf{Among all distributions that are Markov with respect to a given DAG, the faithful distributions constitute a dense, open set.}
\end{quote}
As a consequence, the set of faithful distributions is non-empty, and unfaithful distributions are \emph{nowhere dense} (defined below) and are thus `atypical'. This topological property is with respect to the total variation metric on the joint distribution $\PP(X_V)$ over all vertices $V$ of the Bayesian network. This result holds for any choice of \emph{standard Borel} outcome spaces; it holds in particular for continuous variables $X_V\in \RR^{|V|}$, discrete variables $X_V\in \ZZ^{|V|}$, and mixed data. Formally, a Bayesian network is a DAG $G$ with for every vertex $v$ a Markov kernel $\PP(X_v\given X_{\pa(v)})$, where $\pa(v)$ denote the parents of $v$ in $G$. The above result is about the observational distributions, but not about the Bayesian networks themselves, which are tuples of Markov kernels (one for each $v\in V$). To this end, we introduce a metric $d_{TV}^\circ$ on the space of Bayesian networks --- corresponding to total variation convergence of the Markov kernels, uniformly in the conditioning variables --- for which we show in Theorem \ref{thm:bn_params_nowhere_dense} that the faithful Bayesian networks are open and dense.

These claims do not automatically generalise to other topologies than the total variation topology or the topology induced by $d_{TV}^\circ$: open sets are maintained under refinements, and dense sets are maintained under coarsening, but nothing can be said in general about open and dense sets. Besides the total variation topology, the weak topology is of particular interest since it is tightly connected to statistical testability \citep{dembo1994topological,ermakov2017consistent,genin2017topology,boeken2026topological}. Since faithfulness is not open in the weak topology, some regularity conditions are necessary to obtain the analogue of the above result in the weak topology.

In practice, one often imposes regularity assumptions on the distribution to facilitate statistical inference. To this end, we consider two subclasses of Bayesian networks. First, we consider the class of Bayesian networks parametrised by conditional exponential families. Under regularity conditions, we obtain in Theorem \ref{thm:exp_params_nowhere_dense} that if there exists a faithful parameter, then the faithful parameters constitute a dense and open set, and the unfaithful parameters have Lebesgue measure zero. We further show in Theorem \ref{thm:exp_dist_nowhere_dense} that the induced set of faithful observational distributions is open and dense with respect to the weak topology.
Second, we consider nonparametric models of Bayesian networks with uniformly equicontinuous and uniformly bounded conditional densities. In this class we also obtain that if there exists a faithful model, then the faithful models constitute a dense and open set with respect to $d_{TV}^\circ$ (Theorem \ref{thm:equicont_params_lebesgue_nowhere_dense}). Since for this model class the weak topology and the total variation topology coincide, convergence in this metric corresponds to weak convergence of the Markov kernels, uniformly in the conditioning variable. We also show that the induced set of faithful observational distributions is open and dense with respect to the weak topology (Theorem \ref{thm:equicont_dist_nowhere_dense}).

To relate these results to constraint-based causal discovery, we show that for the subclasses of Bayesian networks with conditional exponential families or nonparametric conditional densities, it follows from results by \cite{genin2017topology} and \cite{lauritzen2024total} that there exists a consistent conditional independence test, and hence that any sound constraint-based causal discovery algorithm using such a test is consistent on an open and dense set of Bayesian networks (Theorem \ref{thm:consistent_discovery}). This holds for settings where the samples spaces are separable complete metric spaces.

\vspace{1em}
There exist multiple mathematical notions of `atypicality'. Given a set $M$, `small' subsets of $M$ are characterised by so-called $\sigma$-ideals: collections of subsets of $M$ containing $\emptyset$, that are closed under taking subsets and countable unions. The family of Lebesgue measure zero sets is a $\sigma$-ideal, and so is the family of meager sets:

\begin{definition}
	A set $I \subseteq M$ is \emph{dense} in another set $U\subseteq M$ if every point in $U$ is in $I$ or is a limit point of $I$. The set $I$ is \emph{nowhere dense} if there is no non-empty open subset of $M$ in which $I$ is dense, and it is \emph{meager} if it is a countable union of nowhere dense sets.
\end{definition}

For example, the set of integers $\ZZ$ is nowhere dense in $\RR$, and the rationals $\QQ$ are meager in $\RR$. The boundary of every open or closed set is nowhere dense, and subsets of nowhere dense sets are nowhere dense. Complements of dense sets are not necessarily nowhere dense or meager, but complements of \emph{dense and open} sets are nowhere dense. Comeager sets (complements of meager sets) are commonly referred to as \emph{typical} \citep{kechris2012classical}. Our results imply that unfaithful distributions and parameters are nowhere dense, which is an even a stronger notion of atypicality.

In causality, the $\sigma$-ideal of meager sets is considered by \cite{ibeling2021topological}, who show that discrete causal models for which \emph{Pearl's Causal Hierarchy} collapses\footnote{A structural causal model `collapses' when all counterfactual  (interventional) queries are identifiable from interventional (observational) distributions.} are meager, which is a topological analogue of a Lebesgue measure-zero result from \cite{bareinboim2022pearls}. \cite{lin2020learning} prove open- and denseness of faithful parameters of discrete Bayesian networks.


\vspace{1em}
The outline of this paper is as follows. In Section \ref{sec:technical} we provide some technical prerequisites about Bayesian networks, the total variation metric, the bounded-Lipschitz metric and the weak topology. In Section \ref{sec:bn} we prove for unconstrained Bayesian networks that faithful distributions are dense and open in the total variation metric, and that the faithful Bayesian networks are open and dense in a newly introduced metric $d_{TV}^\circ$. In Section \ref{sec:exp} we focus on conditional exponential family parametrisations of Bayesian networks, where we show that the faithful parameters are open and dense in the Euclidean parameter space --- generalising the results of Spirtes et al.\ and Meek for linear Gaussian and discrete Bayesian networks --- and that the induced faithful distributions are open and dense in the weak topology. In Section \ref{sec:equicont} we prove for classes of distributions with uniformly equicontinuous and uniformly bounded densities that faithful Bayesian networks are open and dense with respect to $d_{TV}^\circ$, and that faithful distributions are open and dense with respect to the weak topology. In Section \ref{sec:bayesnet_latent} we extend our results to Bayesian networks with latent variables. Finally, in Section \ref{sec:discussion} we discuss the relative value of these results with their various notions of typicality. We explicitly show how topological properties of conditional independence and faithfulness imply the existence of consistent conditional independence tests, and with that a constraint-based causal discovery algorithm that is consistent on an open and dense set of Bayesian networks.

\section{Technical prerequisites}\label{sec:technical}
A \emph{directed acyclic graph} (DAG) is a tuple $G = (V, E)$ with $V$ a finite set of vertices and $E\subset V\times V$ a set of directed edges such that there are no directed cycles. Given such a finite index set $V$, let $\Xcal_V = \prod_{v\in V}\Xcal_v$ be a product of separable complete metric spaces, each equipped with the Borel $\sigma$-algebra $\Bcal(\Xcal_v)$ (which are \emph{standard Borel spaces}), and let $\Pcal(\Xcal_V)$ be the set of probability measures on $\Xcal_V$. Random variables will be denoted with $X_V$, and their values with $x_V$. For $A, B \subseteq V$, a \emph{Markov kernel} $\PP(X_B \given X_A)$ is a measurable map $\Xcal_A \to \Pcal(\Xcal_B)$, where $\Pcal(\Xcal_B)$ is equipped with the smallest $\sigma$-algebra that makes for all $D\in \Bcal(\Xcal_B)$ the evaluation map $\ev_D : \Pcal(\Xcal_B) \to [0,1], \PP \mapsto \PP(X_B \in D)$ measurable. For Markov kernels $\PP(X_A \given X_B), \PP(X_B\given X_C)$, their \emph{product} is defined as the Markov kernel
\begin{equation*}
	\PP(X_A \given X_B) \otimes \PP(X_B\given X_C) : \Xcal_C \to \Pcal(\Xcal_{A\cup B}), ~~~ x_C\mapsto \left(D \mapsto \int_{D}\diff\PP(x_A \given x_B)\diff\PP(x_B \given x_C)\right)
\end{equation*}
where $D\in \Bcal(\Xcal_{A\cup B})$. Since $\Xcal_V$ is standard Borel, there exists for any joint distribution $\PP(X_A, X_B)$ (where $A, B \subseteq V$) a Markov kernel (often referred to as \emph{conditional distribution}) $\PP(X_B \given X_A)$ such that $\PP(X_A, X_B) = \PP(X_B \given X_A) \otimes \PP(X_A)$ (\citealp{bogachev2007measurea}, Corollary 10.4.15). Given distribution $\PP \in \Pcal(\Xcal_V)$ and sets $A, B, C \subseteq V$, we say that $X_A$ is \emph{conditionally independent} of $X_B$ given $X_C$, written $X_A \Indep_\PP X_B \given X_C$, if for all $E_A \in \Bcal(\Xcal_A)$ and $E_B \in \Bcal(\Xcal_B)$ we have $\PP(X_A\in E_A, X_B\in E_B \given X_C) = \PP(X_A\in E_A \given X_C)\PP(X_B\in E_B \given X_C)$ a.s.
If $\PP(X_A, X_B, X_C)$ has a density $p(x_A, x_B, x_C)$, we have $X_A\Indep_\PP X_B\given X_C$ if and only if $p(x_A, x_B\given x_C) = p(x_A\given x_C)p(x_B\given x_C)$ a.e. If the density is continuous, this is equivalent to the factorisation holding everywhere in the support of $\PP(X_A, X_B, X_C)$.

For any vertex $v\in V$, its parents in $G$ are given by the set $\pa(v) := \{w \in V : w\to v \in V\}$, and its ancestors are given by $\{w \in V : \exists\text{ a path } (w\to ... \to v) \in V\}$.
A \emph{Bayesian network}\footnote{In statistical literature, Bayesian networks are often defined as joint distributions $\PP(X_V)$, from which one must deduce the conditional distributions, which are not uniquely defined. For \emph{causal} modelling, it is more suitable to model the Bayesian network as a set of Markov kernels, which uniquely specify the effects of interventions. Our results about the typicality of faithfulness cover both viewpoints: they are shown in the space of observational distributions, and in the space of Bayesian networks.} consists of a DAG $G$ and a tuple of Markov kernels $(\PP(X_v\given X_{\pa(v)}))_{v\in V}$. The joint distribution $\PP(X_V) = \bigotimes_{v\in V} \PP(X_v \given X_{\pa(v)})$ is referred to as the \emph{observational distribution}. Given DAG $G$ with path $\pi=a\sus...\sus b$, a \emph{collider} is a vertex $v$ with $...\to v \ot...$ in $\pi$, where $\sus$ is a placeholder for either $\to$ or $\ot$. For sets of vertices $A, B, C\subseteq V$ we say that $A$ and $B$ are \emph{$d$-separated} given $C$, written $A\perp^d_G B \given C$, if for every path $\pi = a \sus ... \sus b$ between $a\in A$ and $b \in B$, there is a collider on $\pi$ that is not an ancestor of $C$, or there is a non-collider on $\pi$ in $C$. The sets $A$ and $B$ are \emph{$d$-connected} given $C$ if they are not $d$-separated, written $A\nperp^d_G B \given C$.
\begin{definition}
	Given a DAG $G$ and distribution $\PP$, we say that $\PP$ is \emph{Markov with respect to $G$} if for all $A, B, C\subseteq V$ we have
	\begin{equation}\label{eqn:mp}
		A\perp^d_G B \given C \implies X_A \Indep_\PP X_B \given X_C.
	\end{equation}
	For a pair $(G, \PP)$, this is also referred to as the \emph{global Markov property}.
\end{definition}

\begin{theorem}[\citealp{verma1990causal}]
	The global Markov property holds for all Bayesian networks.
\end{theorem}
For a general Bayesian network, the set of conditional independencies in its observational distribution $\PP$ does not characterise the set of $d$-separations in $G$: we might have a $d$-connection $A\nperp^d_G B \given C$ but still have a conditional independence $X_A \Indep_\PP X_B \given X_C$. A Bayesian network is called \emph{faithful} if these cases are excluded:
\begin{definition}
	Given a DAG $G$ and distribution $\PP$, we say that $\PP$ is \emph{faithful with respect to $G$} if for all $A, B, C\subseteq V$ we have
	\begin{equation*}
		A\nperp^d_G B \given C \implies X_A \nIndep_\PP X_B \given X_C.
	\end{equation*}
	A Bayesian network is faithful if its observational distribution is faithful with respect to its graph.
\end{definition}

\begin{example}\label{ex:unfaithful}
	The following Bayesian networks are unfaithful. The corresponding graphs $G^a, G^b$ and $G^c$ are depicted in Figure \ref{fig:unfaithful_example}.
	\begin{enumerate}[label=\alph*)]
		\item Cancelling paths: let $\PP(X_A)$ be any distribution and let $\PP(X_B \given X_A) = \Ncal(\beta_{AB} X_A, \sigma_B^2)$ and $\PP(X_C \given X_A, X_B) = \Ncal(\beta_{AC} X_A + \beta_{BC} X_B, \sigma_C^2)$ for given variances $\sigma_A^2, \sigma_B^2, \sigma_C^2 >0$ and coefficients $\beta_{AC}, \beta_{AB}, \beta_{BC} \in \RR$ with $\beta_{AC} = -\beta_{AB}\beta_{BC}$. Then $A\nperp^d_{G^a} C$ and $X_A\Indep X_C$.\footnote{A realistic example is when opening up a window $(A)$ signals the thermostat to turn up the heating $(B)$, so the inflow of cold air is perfectly offset by the heating, causing a net zero effect on room temperature $(C)$.}
		\item Deterministic variables: let $\PP(X_A\given X_B)$ and $\PP(X_C\given X_B)$ be Markov kernels and let $\PP(X_B)= \delta_{x_B}$ for some $x_B\in \Xcal_B$, so $X_B$ deterministically has the value $x_B$. Then we have $A\nperp^d_{G^b} C$ and $X_A\Indep X_C$.
		\item Deterministic relations: let $\PP(X_A\given X_D)$ and $\PP(X_C\given X_D)$ be Markov kernels and $\PP(X_D)$ any distribution and let $\PP(X_B\given X_D)  = \delta_{X_D}$, so we deterministically set $X_B = X_D$. Then we have $A\nperp^d_{G^c} C \given B$ and $X_A\Indep X_C\given X_B$.\footnote{For Bayesian networks with known deterministic variables or relations, \cite{geiger1990identifying} introduced the \emph{$D$-separation} criterion. This takes into account part of the determinism to deduce additional conditional independencies that are not implied by the $d$-separation criterion.}
	\end{enumerate}
\end{example}
\begin{figure}[!h]
	\centering
	\begin{subfigure}{.3\linewidth}
		\centering
		\begin{tikzpicture}
			\node[var] (A) at (-1,0) {$A$};
			\node[var] (B) at (0,.75) {$B$};
			\node[var] (C) at (1,0) {$C$};
			\draw[arr] (A) to (B);
			\draw[arr] (B) to (C);
			\draw[arr] (A) to (C);
		\end{tikzpicture}
		\caption{DAG $G^a$}
		\label{fig:unfaithful_example:1}
	\end{subfigure}
	\begin{subfigure}{.3\linewidth}
		\centering
		\begin{tikzpicture}
			\node[var] (A) at (-1,0) {$A$};
			\node[var] (B) at (0,.75) {$B$};
			\node[var] (C) at (1,0) {$C$};
			\draw[arr] (B) to (A);
			\draw[arr] (B) to (C);
		\end{tikzpicture}
		\caption{DAG $G^b$}
		\label{fig:unfaithful_example:2}
	\end{subfigure}
	\begin{subfigure}{.3\linewidth}
		\centering
		\begin{tikzpicture}
			\node[var] (A) at (-1,0) {$A$};
			\node[var] (D) at (0,.75) {$D$};
			\node[var] (B) at (0,-.5) {$B$};
			\node[var] (C) at (1,0) {$C$};
			\draw[arr] (D) to (A);
			\draw[arr] (D) to (C);
			\draw[arr] (D) to (B);
		\end{tikzpicture}
		\caption{DAG $G^c$}
		\label{fig:unfaithful_example:3}
	\end{subfigure}
	\caption{DAGs of the Bayesian networks that are given in Example \ref{ex:unfaithful}.}
	\label{fig:unfaithful_example}
\end{figure}

An important step in our proof of the typicality of faithful distributions, is that conditional independence is a topologically closed property, which means that it is preserved by taking limits. Whether this holds depends on the particular choice of the topology on $\Pcal(\Xcal_V)$. A well-known topology is the one related to weak convergence: given probability measures $\PP, \PP_1, \PP_2, ... \in \Pcal(\Xcal_V)$ we say that \emph{$\PP_n$ converges weakly to $\PP$} (also known as \emph{convergence in distribution}) if $\EE_{\PP_n}[f] \to \EE_\PP[f]$ for all continuous functions $f:\Xcal_V\to [-1,1]$, denoted by $\PP_n \wto \PP$. This topology is metrised by the \emph{bounded-Lipschitz} metric $d_{BL}(\PP, \QQ) := \sup_f \left|\int f\diff\PP - \int f\diff \QQ\right|$, where the supremum is taken over all functions $f: \Xcal_v \to [-1,1]$ with $\Lip(f)\leq 1$, where $\Lip(f)$ denotes the Lipschitz constant of $f$ (\citealp{bogachev2007measurea}, Theorem 8.3.2). However, weak convergence does not necessarily preserve conditional independence: for a weakly convergent sequence $\PP_n\to \PP$ with $X_A\Indep_{\PP_n} X_B\given X_C$ for all $n\in\NN$, we might have $X_A\nIndep_{\PP} X_B \given X_C$ in the limit; see e.g.\ \cite{lauritzen1996graphical}, pp. 38-39 for an example. If the sample space $\Xcal_C$ is uncountable, then the set $\{\PP : X_A\Indep_{\PP} X_B\given X_C\}$ is even dense in the weak topology on $\Pcal(\Xcal_{A\cup B\cup C})$ \citep{boeken2026topological}.

Due to this fact, we also consider the total variation topology on $\Pcal(\Xcal_V)$, induced by the \emph{total variation metric} $d_{TV}(\PP, \QQ) := \sup_{A \in \Bcal(\Xcal_V)}|\PP(A) - \QQ(A)|$. This can equivalently be written as $d_{TV}(\PP, \QQ) = \sup_f \left|\int f\diff\PP - \int f\diff \QQ\right|$ where the supremum is taken over all measurable functions $f:\Xcal_V\to [-1,1]$, so it is generally stronger than weak convergence.
Convergence in this metric is denoted by $\PP_n \tvto \PP$.
By \cite{lauritzen2024total} we have that conditional independence is closed in total variation:
\begin{theorem}[\citealp{lauritzen2024total}]\label{thm:ci_is_closed}
	Given probability measures $\PP, \PP_1, \PP_2, ... \in \Pcal(\Xcal_V)$ such that $\PP_n \tvto \PP$, if we have $X_A\Indep_{\PP_n} X_B\given X_C$ for all $n\in\NN$, then also $X_A\Indep_\PP X_B\given X_C$.
\end{theorem}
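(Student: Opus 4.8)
The plan is to rephrase $X_A\Indep_\PP X_B\given X_C$ as an identity between the conditional laws given $X_C$, and then to show that total variation convergence of the joint laws forces the relevant conditional laws to converge in total variation (on $\PP(X_C)$-average), while this identity is preserved under such convergence. A routine preliminary reduces to the case that $A,B,C$ are pairwise disjoint: marginalising the whole sequence onto $A\cup B\cup C$ is $1$-Lipschitz for $d_{TV}$ and changes none of the conditional independences involved; the equivalence $X_A\Indep_\QQ X_B\given X_C\iff X_{A\setminus C}\Indep_\QQ X_{B\setminus C}\given X_C$, valid for every $\QQ$ because conditioning on $X_C$ is insensitive to the $\sigma(X_C)$-measurable coordinates, makes $C$ disjoint from $A\cup B$; and if $D:=A\cap B\neq\emptyset$ then $X_D\Indep_{\PP_n}X_D\given X_C$ says that $X_D$ is $\PP_n$-a.s.\ a deterministic function of $X_C$, a property itself closed under $\tvto$ --- were $\PP(X_D\given X_C{=}x_C)$ non-degenerate on a set of $x_C$ of positive $\PP(X_C)$-mass, then each $\PP_n$ would assign mass $1$ to the graph of its function $x_C\mapsto x_D$ whereas $\PP$ would assign that graph a mass bounded away from $1$, contradicting $d_{TV}(\PP_n,\PP)\to0$ --- so $D$ may be absorbed into $C$. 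Henceforth $A,B,C$ are pairwise disjoint.

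For the first claim, fix any probability measure $\mu$ on $\Xcal_{A\cup B\cup C}$ dominating $\PP$ and all $\PP_n$ (say $\mu=\tfrac12\PP+\sum_n 2^{-n-1}\PP_n$) and disintegrate it over its $X_C$-marginal $\mu_C$ as a Markov kernel $M(x_C,\diff x)$ on $\Xcal_{A\cup B}$. Write $p_n=\diff\PP_n/\diff\mu$ and $p=\diff\PP/\diff\mu$, with $X_C$-marginal densities $p_{n,C},p_C$. By Scheff\'e's lemma $\PP_n\tvto\PP$ is equivalent to $p_n\to p$ in $L^1(\mu)$, which also gives $p_{n,C}\to p_C$ in $L^1(\mu_C)$. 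Since \emph{joint} absolute continuity descends to conditionals, $\PP(X_A,X_B\given X_C{=}x_C)$ has $M(x_C,\cdot)$-density $p(\,\cdot\,,x_C)/p_C(x_C)$ for $\PP(X_C)$-a.e.\ $x_C$, and likewise for each $\PP_n$; a direct estimate, in which one splits off the $\mu_C$-null set $\{p_{n,C}=0\}$ where $\PP_n$'s conditional law is undefined, then yields
\[
\int d_{TV}\big(\PP_n(X_A,X_B\given X_C{=}x_C),\,\PP(X_A,X_B\given X_C{=}x_C)\big)\,\diff\PP(x_C)\ \le\ \|p_n-p\|_{L^1(\mu)}+\|p_{n,C}-p_C\|_{L^1(\mu_C)}+\PP\big(p_{n,C}(X_C){=}0\big),
\]
which tends to $0$ (the last term being $\le\|p_{n,C}-p_C\|_{L^1(\mu_C)}$). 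Marginalising, and using that $d_{TV}$ does not increase under pushforwards, the same convergence holds with $X_A$ alone, or $X_B$ alone, in place of $(X_A,X_B)$.

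For the second claim, fix $n$. For $\PP(X_C)$-a.e.\ $x_C$, the triangle inequality together with the bound $d_{TV}(\alpha_1\otimes\beta_1,\alpha_2\otimes\beta_2)\le d_{TV}(\alpha_1,\alpha_2)+d_{TV}(\beta_1,\beta_2)$ bounds $d_{TV}\big(\PP(X_A,X_B\given X_C{=}x_C),\,\PP(X_A\given X_C{=}x_C)\otimes\PP(X_B\given X_C{=}x_C)\big)$ by the sum of the deviation between $\PP$'s and $\PP_n$'s joint conditional law, the deviation between $\PP$'s and $\PP_n$'s product of conditional laws, and $d_{TV}\big(\PP_n(X_A,X_B\given X_C{=}x_C),\,\PP_n(X_A\given X_C{=}x_C)\otimes\PP_n(X_B\given X_C{=}x_C)\big)$, the last of which vanishes for $\PP_n(X_C)$-a.e.\ $x_C$ by conditional independence of $\PP_n$. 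Integrating against $\PP(X_C)$: the first two integrals tend to $0$ by the first claim, and the third is $\le\PP(p_{n,C}(X_C){=}0)\to0$. Since the left-hand side does not depend on $n$, it is $0$; hence $\PP(X_A,X_B\given X_C)=\PP(X_A\given X_C)\otimes\PP(X_B\given X_C)$ holds $\PP(X_C)$-almost surely, i.e.\ $X_A\Indep_\PP X_B\given X_C$.

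The main obstacle is the first claim: one must choose the dominating measure and its disintegration so that the conditional densities are genuinely controlled, which rests on the (correct, and easily mis-stated) fact that \emph{joint} absolute continuity --- not merely absolute continuity of the marginals --- passes to conditional distributions, and on carefully discarding the $\PP(X_C)$-negligible set on which $\PP_n$'s conditionals are ill-defined. The ``a deterministic relation to $X_C$ is total-variation-closed'' fact used in the preliminary reduction is a second, smaller point requiring its own short argument.
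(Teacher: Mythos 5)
The paper does not prove this statement; it imports it verbatim from \cite{lauritzen2024total}, so there is no internal proof to compare against. Your argument is, as far as I can check, a correct self-contained proof, and it takes the natural route: dominate everything by $\mu=\tfrac12\PP+\sum_n 2^{-n-1}\PP_n$, disintegrate over $X_C$, show that $L^1(\mu)$-convergence of the joint densities (Scheff\'e) forces $\int d_{TV}\bigl(\PP_n(\cdot\given x_C),\PP(\cdot\given x_C)\bigr)\,\diff\PP(x_C)\to 0$ after excising the set $\{p_{n,C}=0\}$ (whose $\PP(X_C)$-mass is controlled by $\|p_{n,C}-p_C\|_{L^1(\mu_C)}$), and pass the factorisation identity to the limit via the triangle inequality and $d_{TV}(\alpha_1\otimes\beta_1,\alpha_2\otimes\beta_2)\le d_{TV}(\alpha_1,\alpha_2)+d_{TV}(\beta_1,\beta_2)$. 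Your handling of the $n$-dependent null sets where $\PP_n$'s conditionals are undefined is exactly the point where careless versions of this argument fail, and you get it right. Two remarks. First, the preliminary reduction to pairwise disjoint $A,B,C$ is superfluous: you can run the identical disintegration argument on the pushforward of each measure under $x_V\mapsto(x_A,x_B,x_C)\in\Xcal_A\times\Xcal_B\times\Xcal_C$ (total variation does not increase under pushforwards), and the overlap cases are subsumed; this lets you delete the ``a deterministic relation to $X_C$ is total-variation-closed'' sub-argument entirely. Second, if you do keep that sub-argument, note that making ``$\PP$ assigns every graph mass bounded away from $1$'' precise requires the measurability in $x_C$ of the maximal-atom function $x_C\mapsto\sup_{x_D}\PP(X_D=x_D\given X_C=x_C)$, which holds on standard Borel spaces but is not free; as written this is the only step of your proof that is gestured at rather than argued.
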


\section{Unconstrained Bayesian networks}\label{sec:bn}
In this section, we consider the typicality of faithfulness in the unconstrained class of Bayesian networks; that is, Bayesian networks without any assumptions on the Markov kernels. Faithfulness is a property of the observational distribution of a Bayesian network, so we first consider in Section \ref{sec:bn_dists} the typicality of faithful observational distributions in the space of all distributions that are Markov with respect to the given DAG, equipped with the total variation metric. In Section \ref{sec:bn_params} we consider the typicality of faithfulness in the space of Bayesian networks (which are Markov with respect to the given DAG $G$) equipped with a newly introduced metric.

\subsection{Typicality of faithfulness in the space of observational distributions}\label{sec:bn_dists}
Given a DAG $G=(V, E)$, we consider the following sets of Markov, faithful, and unfaithful distributions relative to $G$:
\begin{align*}
	M_G & := \left\{\PP \in \Pcal(\Xcal_V) : A\perp^d_G B \given C \implies X_A \Indep_\PP X_B \given X_C \textrm{ for all $A, B, C\subseteq V$} \right\} \\
	F_G & := \left\{\PP \in M_G : A\nperp^d_G B \given C \implies X_A \nIndep_\PP X_B \given X_C \textrm{ for all $A, B, C\subseteq V$} \right\}          \\
	U_G & := M_G \setminus F_G.
\end{align*}
We will derive properties of $F_G$ and $U_G$ as subsets of the metric space $(M_G, d_{TV})$; in later sections we will add regularity conditions on $M_G, F_G$ and $U_G$, and consider other topologies. First, if we let $I_{A,B|C} = \{\PP\in \Pcal(\Xcal_V) : X_A\Indep_\PP X_B\given X_C\}$, note that we can write
\begin{align*}
	M_G = \Pcal(\Xcal_V) \cap \bigcap_{A\perp^d_G B\given C}I_{A,B|C},
	\quad\quad F_G = M_G\cap \bigcap_{A\nperp^d_G B\given C}(M_G\setminus I_{A,B|C}),
	\quad\quad U_G  = M_G\setminus F_G.
\end{align*}
From Theorem \ref{thm:ci_is_closed} it is immediate that $M_G$ is a closed subspace of $\Pcal(\Xcal_V)$, and that $F_G$ is open in $M_G$. For our most general nonparametric result, it remains to show that $F_G$ is dense.
The following result states that the set of distributions that are Markov \emph{and} have a particular conditional dependence is dense in total variation. The proof refers to technical lemmas that are provided in Section \ref{sec:cd_dense_proof}.
\begin{lemma}\label{thm:cd_dense}
	For every $\PP\in M_G$ and every $A, B, C \subseteq V$ such that $A \nperp_G^d B\given C$, there is a sequence $\PP_1, \PP_2, ... \in M_G$ such that $X_A \nIndep_{\PP_n} X_B \given X_C$ for all $n\in\NN$ and $\PP_n \tvto \PP$.
\end{lemma}
\begin{proof}
	Let $\PP\in M_G$ be given be such that $X_A\Indep_{\PP} X_B \given X_C$ --- otherwise the result holds trivially. By Lemma \ref{thm:d_weak_completeness}, there exists a $\PP_1$ that is Markov and has $X_A\nIndep_{\PP_1} X_B \given X_C$. The interpolation $(\PP_\lambda)_{\lambda\in(0,1)}$ between $\PP$ and $\PP_1$ from Definition \ref{def:markov_interpolation} lies in $M_G$, and has that $X_A\nIndep_{\PP_\lambda} X_B \given X_C$ for all positive $\lambda$ below some $\lambda^* \in (0,1)$ (Lemma \ref{thm:interpolation_dependence}), which converges in total variation to $\PP$ as $\lambda \to 0$ (Lemma \ref{thm:interpolation_convergence}). One obtains a suitable sequence by setting $\PP_n := \PP_{\lambda^*/2n}$.
\end{proof}

In other words, the set $\{\PP \in M_G : X_A \nIndep_{\PP} X_B \given X_C \}$ is dense in $M_G$. As a corollary that might be of independent interest, we have that conditional dependence is dense in total variation.
\begin{corollary}
	The set $\{\PP \in \Pcal(\Xcal_V) : X_A \nIndep_{\PP} X_B \given X_C \}$ is dense in $(\Pcal(\Xcal_V), d_{TV})$.
\end{corollary}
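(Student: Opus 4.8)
The plan is to obtain the corollary as a direct instance of Theorem~\ref{thm:cd_dense}, taking for $G$ the \emph{complete} DAG $G^\ast$ on $V$. Fix any total order $v_1 < \dots < v_n$ of $V$ and let $G^\ast$ have the edge $v_i \to v_j$ precisely when $i < j$, so that $\pa(v_j) = \{v_1,\dots,v_{j-1}\}$ for every $j$. The first step is to observe that $M_{G^\ast} = \Pcal(\Xcal_V)$: since $\Xcal_V$ is standard Borel, every $\PP \in \Pcal(\Xcal_V)$ admits the chain-rule factorisation $\PP(X_V) = \bigotimes_{j=1}^{n} \PP(X_{v_j} \given X_{v_1}, \dots, X_{v_{j-1}}) = \bigotimes_{j=1}^{n} \PP(X_{v_j} \given X_{\pa(v_j)})$, obtained by iterating the disintegration available for any joint distribution. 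This exhibits $\PP$ as the observational distribution of a Bayesian network over $G^\ast$, so $\PP \in M_{G^\ast}$ by the global Markov property; the reverse inclusion is trivial.

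The second step is to exhibit, for the relevant $A, B, C$, the $d$-connection needed to apply the theorem. Assume $A, B, C \subseteq V$ are pairwise disjoint with $A, B$ nonempty, pick $a \in A$ and $b \in B$, and consider the one-edge path $\pi$ given by whichever of the edges $a \to b$, $b \to a$ is present in $G^\ast$. This $\pi$ has no collider, and its only two vertices $a, b$ lie outside $C$, so $\pi$ is $d$-connecting given $C$; hence $A \nperp^d_{G^\ast} B \given C$. Applying Theorem~\ref{thm:cd_dense} with $G = G^\ast$ then produces, for any prescribed $\PP \in \Pcal(\Xcal_V) = M_{G^\ast}$, a sequence $\PP_n \in M_{G^\ast} = \Pcal(\Xcal_V)$ with $X_A \nIndep_{\PP_n} X_B \given X_C$ for all $n \in \NN$ and $\PP_n \tvto \PP$, which is precisely density of $\{\PP \in \Pcal(\Xcal_V) : X_A \nIndep_\PP X_B \given X_C\}$ in $(\Pcal(\Xcal_V), d_{TV})$.

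There is no real obstacle here: the only content beyond invoking Theorem~\ref{thm:cd_dense} is the routine observation that the complete DAG imposes no Markov constraints and is maximally $d$-connected. The corollary is understood in its natural, nonvacuous form with $A, B, C$ pairwise disjoint and $A, B$ nonempty; the degenerate configurations either render the set in question empty or, after conditioning on shared coordinates, reduce to this case, and none of them affects the argument.
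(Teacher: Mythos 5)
Your proof is correct and follows essentially the same route as the paper: take the complete DAG $G^\ast$, note $M_{G^\ast} = \Pcal(\Xcal_V)$, and invoke Theorem~\ref{thm:cd_dense}. You are merely more explicit than the paper about why the chain rule gives $M_{G^\ast} = \Pcal(\Xcal_V)$, why the required $d$-connection holds in $G^\ast$, and how the degenerate choices of $A,B,C$ are to be read.
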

\begin{proof}
	Let $G$ be a fully connected DAG with vertices $V$, then $M_G=\Pcal(\Xcal_V)$ and the result follows from Lemma \ref{thm:cd_dense}.
\end{proof}

Our first result concerning the faithfulness of nonparametric Bayesian networks is as follows.
\begin{theorem}\label{thm:bn_dist_nowhere_dense}
	Given a DAG $G$, the set of faithful distributions $F_G$ is non-empty, open and dense, and the unfaithful distributions $U_G$ are nowhere dense in $(M_G, d_{TV})$.
\end{theorem}
\begin{proof}
	By Theorem \ref{thm:ci_is_closed} and Lemma \ref{thm:cd_dense} we have for any given $A, B, C \subseteq V$ with $A\nperp_G^d B \given C$ that $M_G\setminus I_{A,B|C}$ is dense and open in $M_G$. Hence, $F_G$ is a dense open set as it is a finite intersection of dense open sets. Since $M_G$ is non-empty (take for example a product of independent distributions), the dense set $F_G$ is non-empty as well, proving the existence of a faithful distribution. Finally, $U_G$ is the complement of a dense open set, hence nowhere dense.
\end{proof}

To conclude, unfaithful distributions are `atypical': there is no non-empty open set of distributions that are Markov with respect to $G$, in which any faithful distribution in this set can be approximated by unfaithful ones. This loosely says that there is no `cluster' of unfaithful distributions.

\subsubsection{Conditional dependence is dense in total variation}\label{sec:cd_dense_proof}
In this section, we fill in the details of the proof of Theorem \ref{thm:bn_dist_nowhere_dense}.
\begin{lemma}\label{thm:d_weak_completeness}
	For any DAG $G$, standard Borel space $\Xcal_V$ and subsets $A, B, C \subseteq V$ such that $A\nperp_G^d B\given C$, there exists a distribution $\PP \in M_G$ with the conditional dependence $X_A\nIndep_\PP X_B \given X_C$.
\end{lemma}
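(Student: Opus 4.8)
The plan is to reduce to two-valued variables and then appeal to the classical completeness of $d$-separation for discrete models. Since each $\Xcal_v$ is a non-trivial standard Borel space, fix two distinct points $\xi_v^0\neq\xi_v^1$ in $\Xcal_v$ and let $\iota_v:\{0,1\}\to\Xcal_v$, $i\mapsto\xi_v^i$; write $\iota=\prod_{v\in V}\iota_v:\{0,1\}^V\to\Xcal_V$, which is injective with finite (hence Borel) image. It then suffices to exhibit a Bayesian network over $G$ with binary variables $Y_V$, say with observational distribution $\QQ$, satisfying $Y_A\nIndep_\QQ Y_B\given Y_C$: transporting the conditional-probability tables along $\iota$ makes $\PP:=\iota_*\QQ\in\Pcal(\Xcal_V)$ a Bayesian network over $G$, hence $\PP\in M_G$ by the global Markov property \citep{verma1990causal}, and because $\iota$ is injective the conditional (in)dependence statements among the $Y_v$ transfer verbatim to the $X_v$, giving $X_A\nIndep_\PP X_B\given X_C$.

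It remains to build such a binary $\QQ$, i.e.\ a binary Bayesian network over $G$ in which the prescribed $d$-connection $A\nperp^d_G B\given C$ appears as a conditional dependence. One route is to invoke this directly: by Meek's theorem (Theorem \ref{thm:meek}, applied with every variable binary, so $\Theta_\Dcal$ is a non-empty open subset of some $\RR^d$) the unfaithful parameters form a Lebesgue-null set, so a faithful binary Bayesian network over $G$ exists, and being faithful it realizes \emph{every} $d$-connection, in particular $A\nperp^d_G B\given C$. Alternatively one can give a direct construction witnessing only the single required $d$-connection: pick a vertex-minimal active path $\pi=v_0\sus\cdots\sus v_n$ between some $a\in A$ and $b\in B$ given $C$, and for every collider $v_i$ on $\pi$ fix a shortest directed path $\gamma_i$ from $v_i$ to some $c_i\in C$; one then declares every vertex lying on neither $\pi$ nor any $\gamma_i$ to be an independent fair coin (ignoring its parents) and, along $\pi$ and the $\gamma_i$, propagates a single bit through near-deterministic kernels -- copying the bit through each non-collider of $\pi$, taking the XOR (sum modulo $2$) of the two incoming bits at each collider $v_i$, and copying that XOR down $\gamma_i$ to $c_i$ -- so that conditioning on the values $y_C$, in particular the $y_{c_i}$, couples the two arms of every collider and leaves an uncancelled correlation between $Y_a$ and $Y_b$.

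I expect the main obstacle to depend on the route. Along the first route the only real work after the two-point reduction is the bookkeeping that $\iota_*\QQ$ is Markov to $G$ and that conditional independence is invariant under the injective relabelling $\iota$, which is routine; one should, however, record the standing assumption $|\Xcal_v|\ge2$, without which the lemma genuinely fails, e.g.\ when every active path runs through a singleton space as a non-collider (as in Example \ref{ex:unfaithful}(b) with $\Xcal_B$ a point). Along the second route the delicate point is verifying that the construction truly yields a dependence: one must rule out that the directed paths $\gamma_i$ re-enter $\pi$ or one another, and that conditioning on the elements of $C$ outside $\{c_i\}$ accidentally blocks or cancels the signal; minimality of $\pi$ and of the $\gamma_i$ handles most of this, and any residual degeneracy can be removed by replacing the exact copies and XORs with $\mathrm{Bernoulli}(1/2\pm\epsilon)$-perturbed versions and arguing by continuity in $\epsilon$, after which $Y_A\nIndep_\QQ Y_B\given Y_C$ follows by exhibiting two conditioning values under which the conditional law of $(Y_a,Y_b)$ differs, or a single nonzero conditional covariance. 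I would take the first route, since Theorem \ref{thm:meek} is already available and reduces the lemma to the embedding argument.
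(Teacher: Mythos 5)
Your proposal is correct and follows essentially the same route as the paper: embed two points into each $\Xcal_v$, push forward a binary Bayesian network, and reduce the real work to the existence of a binary distribution over $G$ realising the given $d$-connection. The only difference is the black box invoked for that discrete existence step --- you use the full measure-zero statement of Theorem \ref{thm:meek} to obtain a globally faithful binary network, whereas the paper uses the lighter per-$d$-connection existence result (Meek 1998, Lemma 3) and handles the degenerate case $A=B$ separately; both are legitimate, and your side remark that the lemma implicitly needs $|\Xcal_v|\ge 2$ is a fair observation the paper leaves tacit.
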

\begin{proof}
	For each $v\in V$ pick an injective $f_v: \{0,1\} \to \Xcal_v$ and note that sets $\{f_v(0)\}$ and $\{f_v(1)\}$ are measurable since $\Xcal_v$ is standard Borel. We will construct a binary distribution on the image of $f_V$ that has the required dependence.
	Note that without loss of generality we can assume that $A$ and $B$ are singletons: any $\PP(X_V)$ with $X_A\nIndep_\PP X_B \given X_C$ also has $X_{A'}\nIndep_\PP X_{B'} \given X_C$ for supersets $A\subset A'$ and $B\subset B'$. Also, the given $d$-connection implies $A, B\notin C$.
	If we have $A=B$, for all $v\in V$ set $\PP(X_v = f_v(0)) = p$ and $\PP(X_v = f_v(1))=1-p$ for some $p\in(0,1)$ and let $\PP(X_V) = \bigotimes_{v\in V}\PP(X_v)$. Then $\PP(X_V)$ is Markov and $X_A \nIndep_\PP X_B\given X_C$. If $A\neq B$, then by \cite{meek1998graphical} Lemma 3,\footnote{\cite{meek1995strong} proves this result assuming weak transitivity of binary distributions, which does not hold in general. \cite{meek1998graphical} provides a correct proof based on \emph{marginal} weak transitivity.} there exists a distribution $\tilde{\PP}$ on $\{0,1\}^{|V|}$ that is Markov with respect to $G$ and which has the conditional dependence $X_A\nIndep_{\tilde{\PP}} X_B \given X_C$, so there are $\tilde{x}_A, \tilde{x}_B, \tilde{x}_C \in \{0,1\}$ with $\tilde{\PP}(\tilde{x}_C) > 0$ such that $\tilde{\PP}(\tilde{x}_A, \tilde{x}_B \given \tilde{x}_C)\neq \tilde{\PP}(\tilde{x}_A \given \tilde{x}_C)\tilde{\PP}(\tilde{x}_B \given \tilde{x}_C)$. Define the pushforward $\PP(X_V) := \tilde{\PP}\circ f_V^{-1}$, which has
	\begin{multline*}
		\PP\big(X_A = f_A(\tilde{x}_A), X_B = f_B  (\tilde{x}_B) \given X_C=f_C(\tilde{x}_C)\big)
		= \tilde{\PP}\big(\tilde{x}_A, \tilde{x}_B \given \tilde{x}_C\big)                                                      \\
		\neq \tilde{\PP}\big(\tilde{x}_A \given \tilde{x}_C\big)\tilde{\PP}\big(\tilde{x}_B \given \tilde{x}_C\big)             = \PP\big(X_A = f_A(\tilde{x}_A)\given X_C=f_C(\tilde{x}_C)\big)\PP\big(X_B = f_B(\tilde{x}_B) \given X_C=f_C(\tilde{x}_C)\big)
	\end{multline*}
	so indeed $X_A\nIndep_\PP X_B \given X_C$. By a similar reasoning, for any $A, B, C \subseteq V$ a conditional independence $X_A\Indep_{\tilde{\PP}} X_B \given X_C$ implies $X_A\Indep_\PP X_B \given X_C$, and thus $\PP\in M_G$.
\end{proof}

Next, we aim to construct an interpolation of any two given $\PP_0, \PP_1 \in M_G$, within $M_G$. Naively taking a mixture of the observational distributions does not give a distribution that is Markov with respect to $G$, as is shown in the following example.
\begin{example}
	Let $(\PP_i(X_A\given X_C), \PP_i(X_B\given X_C), \PP_i(X_C))$ for $i\in\{0,1\}$ be Bayesian networks with DAG $G$ as depicted in Figure \ref{fig:bn_interpolation_1}, which both have $X_A\Indep X_B \given X_C$. A mixture of the observational distributions $\tilde{\PP}_\lambda(X_A, X_B, X_C) := (1-\lambda)\PP_0(X_A, X_B, X_C) + \lambda\PP_1(X_A, X_B, X_C)$ would correspond to the $(A\cup B\cup C)$-marginal of the Bayesian network $(\PP_\alpha(X_A\given X_C), \PP_\alpha(X_B\given X_C), \PP_\alpha(X_C), \PP(\alpha))$ with $\alpha \sim \mathrm{Bernoulli}(\lambda)$. Its graph is depicted in Figure \ref{fig:bn_interpolation_2}, from which we see that $\tilde{\PP}_\lambda$ need not be Markov with respect to $G$, as we might have $X_A\nIndep_{\PP_\lambda} X_B \given X_C$. Instead, taking a mixture of the Markov kernels of the Bayesian networks gives $(\PP_{\alpha_A}(X_A\given X_C), \PP_{\alpha_B}(X_B\given X_C), \PP_{\alpha_C}(X_C), \PP(\alpha_A), \PP(\alpha_B), \PP(\alpha_C))$ with $\alpha_A, \alpha_B, \alpha_C \sim \mathrm{Bernoulli}(\lambda)$ i.i.d., whose $(A\cup B\cup C)$-marginal $\PP_\lambda(X_A, X_B, X_C)$ (see Definition \ref{def:markov_interpolation} below) is Markov with respect to $G$ (see Figure \ref{fig:bn_interpolation_3}).
\end{example}
\begin{figure}[htb]
	\centering
	\begin{subfigure}{.3\linewidth}
		\centering
		\begin{tikzpicture}
			\node[var] (A) at (-1,0) {$A$};
			\node[var] (C) at (0,.75) {$C$};
			\node[var] (B) at (1,0) {$B$};
			\draw[arr] (C) to (A);
			\draw[arr] (C) to (B);
		\end{tikzpicture}
		\caption{DAG $G$}
		\label{fig:bn_interpolation_1}
	\end{subfigure}
	\begin{subfigure}{.3\linewidth}
		\centering
		\begin{tikzpicture}
			\node[var] (A) at (-1,0) {$A$};
			\node[var] (B) at (1,0) {$B$};
			\node[var] (C) at (0,.75) {$C$};
			\node[var] (alpha) at (0,2) {$\alpha$};
			\draw[arr] (C) to (A);
			\draw[arr] (C) to (B);
			\draw[arr] (alpha) to (A);
			\draw[arr] (alpha) to (B);
			\draw[arr] (alpha) to (C);
		\end{tikzpicture}
		\caption{Non-Markov mixture}
		\label{fig:bn_interpolation_2}
	\end{subfigure}
	\begin{subfigure}{.3\linewidth}
		\centering
		\begin{tikzpicture}
			\node[var] (A) at (-1,0) {$A$};
			\node[var] (B) at (1,0) {$B$};
			\node[var] (C) at (0,.75) {$C$};
			\node[var] (alphaA) at (-1.2,1.25) {$\alpha_A$};
			\node[var] (alphaC) at (0,2) {$\alpha_C$};
			\node[var] (alphaB) at (1.2,1.25) {$\alpha_B$};
			\draw[arr] (C) to (A);
			\draw[arr] (C) to (B);
			\draw[arr] (alphaA) to (A);
			\draw[arr] (alphaB) to (B);
			\draw[arr] (alphaC) to (C);
		\end{tikzpicture}
		\caption{Markov mixture}
		\label{fig:bn_interpolation_3}
	\end{subfigure}
	\label{fig:bn_interpolation}
	\caption{Graphs relating to different mixtures of Bayesian networks with graph $G$.}
\end{figure}

The issue that is detailed in the previous example is resolved in Definition \ref{def:markov_interpolation}.
\begin{definition}\label{def:markov_interpolation}
	Given a DAG $G$ and two distributions $\PP_0, \PP_1 \in M_G$ define the interpolation
	\begin{equation*}
		\PP_\lambda(X_V) := \bigotimes_{v\in V}\left((1-\lambda)\PP_0(X_v \given X_{\pa(v)}) + \lambda \PP_1(X_v \given X_{\pa(v)})\right).
	\end{equation*}
\end{definition}
When $\PP_0$ and $\PP_1$ are the observational distributions of Bayesian networks $m_0 = (\PP_0(X_v\given X_{\pa(v)}))_{v\in V}$ and $m_1 = (\PP_1(X_v\given X_{\pa(v)}))_{v\in V}$, this essentially amounts to considering the observational distribution $\PP_\lambda$ of the mixture $(1-\lambda) m_0 + \lambda m_1$ of the Bayesian networks in the sense of Figure \ref{fig:bn_interpolation_3} --- this will explicitly be deployed in Sections \ref{sec:bn_params} and \ref{sec:equicont_params}.
It is therefore immediate that $\PP_\lambda \in M_G$ for all $\lambda \in [0,1]$. If $\PP_0$ and $\PP_1$ have densities $p_0$ and $p_1$ with respect to some measure $\QQ$, then $\PP_\lambda$ has a density $p_\lambda$ given by the expansion
\begin{align}
	\begin{split}\label{eqn:expansion}
		p_\lambda(x_V) &= \prod_{v\in V}\left((1-\lambda)p_0(x_v\given x_{\pa(v)}) + \lambda p_1(x_v\given x_{\pa(v)})\right) \\
		& = \sum_{\alpha\in \{0,1\}^d} (1-\lambda)^{d-|\alpha|}\lambda^{|\alpha|} p_{\alpha_d}(x_{v_d}\given x_{\pa(v_d)})...p_{\alpha_1}(x_{v_1})
	\end{split}
\end{align}
where $d= |V|$ and $(v_1, ..., v_d)$ is a topological ordering of $G$. Our goal is to show that if we have conditional dependence $X_A \nIndep_{\PP_1} X_B \given X_C$ in $\PP_1$, then it is maintained in the interpolation $\PP_\lambda$ as $\lambda$ approaches 0. This is not immediate, as shown in the following example.

\begin{example}
	Consider a Bayesian network with variables $X, Y$ taking values in the interval $[-1,1]$ and graph $X\to Y$. Let $\PP_0(X, Y)$ be a uniform distribution on $(0,1)\times (0,1) \cup (-1,0)\times (-1,0)$ and $\PP_1$ a uniform distribution on $(-1,0)\times (0,1) \cup (0,1)\times (-1,0)$. The interpolation $\PP_\lambda$ has a uniform distribution on $(-1,1)^2$ for $\lambda = 1/2$, and thus an independence $X\Indep Y$. This is graphically depicted in Figure \ref{fig:interpolation_independence}.
\end{example}
\begin{figure}[!htb]
	\centering
	\begin{subfigure}{.3\linewidth}
		\centering
		\begin{tikzpicture}
			\draw[-] (-1, 0) -- (1, 0);
			\draw[-] (0, -1) -- (0, 1);
			\fill[gray!50] (0,0) rectangle (1,1);
			\fill[gray!50] (0,0) rectangle (-1,-1);
			\draw[thin, dashed, gray] (-1,-1) grid (1,1);
			\draw (-1,0.05) -- (-1,-0.05) node[left] {-1};
			\draw (1,0.05) -- (1,-0.05) node[right] {1};
			\draw (0.05,-1) -- (-0.05,-1) node[below] {-1};
			\draw (0.05,1) -- (-0.05,1) node[above] {1};
		\end{tikzpicture}
		\caption{$\PP_0: X\nIndep Y$}
	\end{subfigure}
	\begin{subfigure}{.3\linewidth}
		\centering
		\begin{tikzpicture}
			\draw[-] (-1, 0) -- (1, 0);
			\draw[-] (0, -1) -- (0, 1);
			\fill[gray!30] (0,0) rectangle (1,1);
			\fill[gray!30] (0,0) rectangle (-1,-1);
			\fill[gray!30] (-1,0) rectangle (0,1);
			\fill[gray!30] (0,-1) rectangle (1,0);
			\draw[thin, dashed, gray] (-1,-1) grid (1,1);
			\draw (-1,0.05) -- (-1,-0.05) node[left] {-1};
			\draw (1,0.05) -- (1,-0.05) node[right] {1};
			\draw (0.05,-1) -- (-0.05,-1) node[below] {-1};
			\draw (0.05,1) -- (-0.05,1) node[above] {1};
		\end{tikzpicture}
		\caption{$\PP_{\frac{1}{2}}: X\Indep Y$}
	\end{subfigure}
	\begin{subfigure}{.3\linewidth}
		\centering
		\begin{tikzpicture}
			\draw[-] (-1, 0) -- (1, 0);
			\draw[-] (0, -1) -- (0, 1);
			\fill[gray!50] (-1,0) rectangle (0,1);
			\fill[gray!50] (0,-1) rectangle (1,0);
			\draw[thin, dashed, gray] (-1,-1) grid (1,1);
			\draw (-1,0.05) -- (-1,-0.05) node[left] {-1};
			\draw (1,0.05) -- (1,-0.05) node[right] {1};
			\draw (0.05,-1) -- (-0.05,-1) node[below] {-1};
			\draw (0.05,1) -- (-0.05,1) node[above] {1};
		\end{tikzpicture}
		\caption{$\PP_1: X\nIndep Y$}
	\end{subfigure}
	\caption{Mixtures of dependent variables can become independent.}
	\label{fig:interpolation_independence}
\end{figure}

Nevertheless, given $\PP_0$ and $\PP_1$, the dependence is maintained on an interval $(0,\lambda^*) \subset (0,1)$, as shown by the following result.

\begin{lemma}\label{thm:interpolation_dependence}
	Given distributions $\PP_0, \PP_1\in M_G$ with dependence $X_A \nIndep_{\PP_1} X_B \given X_C$, there exists for the interpolation $\PP_\lambda$ from Definition \ref{def:markov_interpolation} a $\lambda^*\in (0,1)$ such that $X_A \nIndep_{\PP_\lambda} X_B \given X_C$ for all $\lambda\in(0,\lambda^*)$.
\end{lemma}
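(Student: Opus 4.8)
The plan is to characterise the conditional independence $X_A \Indep_{\PP_\lambda} X_B \given X_C$ by the vanishing of a family of functionals that turn out to be \emph{polynomials} in $\lambda$, and then to invoke the identity theorem for polynomials. First I would pass to a common dominating measure: for each $v \in V$ set $\QQ_v(X_v \given X_{\pa(v)}) := \tfrac12 \PP_0(X_v \given X_{\pa(v)}) + \tfrac12 \PP_1(X_v \given X_{\pa(v)})$, so that, $\Xcal_V$ being standard Borel, there are jointly measurable densities $p_i(x_v \given x_{\pa(v)}) := \diff\PP_i(X_v \given x_{\pa(v)}) / \diff\QQ_v(X_v \given x_{\pa(v)}) \in [0,2]$ for $i \in \{0,1\}$. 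With $\QQ := \bigotimes_{v \in V} \QQ_v(X_v \given X_{\pa(v)})$, an induction over a topological ordering of $G$ (using Fubini) shows $\PP_\lambda \ll \QQ$ for all $\lambda \in [0,1]$, with density $q_\lambda(x_V) = \prod_{v\in V}\bigl((1-\lambda)p_0(x_v\given x_{\pa(v)}) + \lambda p_1(x_v\given x_{\pa(v)})\bigr)$; by the expansion \eqref{eqn:expansion} this is, for fixed $x_V$, a polynomial in $\lambda$ of degree $\le d := |V|$ with coefficients in $[0,2^d]$, and the same then holds for the density $q_\lambda(x_W)$ of the $W$-marginal of $\PP_\lambda$ relative to $\mu := \QQ(X_W)$, where $W := (A\setminus C)\cup(B\setminus C)\cup C$.

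Next I would disintegrate over the conditioning variables, writing $\mu = \int_{\Xcal_C}\kappa_{x_C}\,\diff\mu_C(x_C)$ with $\mu_C := \QQ(X_C)$ and $\kappa_{x_C} := \QQ(X_{A\setminus C}, X_{B\setminus C} \given X_C = x_C)$ a Markov kernel on $\Xcal_{A\setminus C}\times\Xcal_{B\setminus C}$. For bounded measurable $f\colon\Xcal_{A\setminus C}\to\RR$, $g\colon\Xcal_{B\setminus C}\to\RR$ and $h\colon\Xcal_C\to\RR$, put
\begin{equation*}
	\Phi_{f,g,h}(\lambda) := \int_{\Xcal_C}\left[\left(\int\! fg\, q_\lambda(\cdot,x_C)\,\diff\kappa_{x_C}\right)\!\left(\int\! q_\lambda(\cdot,x_C)\,\diff\kappa_{x_C}\right) - \left(\int\! f\, q_\lambda(\cdot,x_C)\,\diff\kappa_{x_C}\right)\!\left(\int\! g\, q_\lambda(\cdot,x_C)\,\diff\kappa_{x_C}\right)\right] h(x_C)\,\diff\mu_C(x_C).
\end{equation*}
Each of the four inner integrals is a polynomial in $\lambda$ of degree $\le d$ with coefficients that are bounded measurable functions of $x_C$, so $\Phi_{f,g,h}$ is a polynomial in $\lambda$ of degree $\le 2d$. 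The crux is then the equivalence
\begin{equation*}
	X_A \Indep_{\PP_\lambda} X_B \given X_C \quad\Longleftrightarrow\quad \Phi_{f,g,h}(\lambda) = 0 \text{ for all bounded measurable } f, g, h .
\end{equation*}
To see this, use $X_A \Indep_{\PP_\lambda} X_B \given X_C \Leftrightarrow X_{A\setminus C} \Indep_{\PP_\lambda} X_{B\setminus C} \given X_C$, the disintegration $\PP_\lambda(X_{A\setminus C}, X_{B\setminus C} \given X_C = x_C) = q_\lambda(\cdot, x_C)\kappa_{x_C} / \int q_\lambda(\cdot, x_C)\,\diff\kappa_{x_C}$ (valid for $\PP_\lambda(X_C)$-almost every $x_C$), and the elementary fact that a probability measure on a product space is the product of its marginals iff $\EE[f(Y_1)g(Y_2)] = \EE[f(Y_1)]\,\EE[g(Y_2)]$ for all bounded measurable $f, g$. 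The bracketed term in $\Phi_{f,g,h}$ is precisely this defect of factorisation for the conditional law of $(X_{A\setminus C}, X_{B\setminus C})$ given $X_C = x_C$, with the normalising factor cleared out (so that it also vanishes wherever $\int q_\lambda(\cdot, x_C)\,\diff\kappa_{x_C} = 0$); taking $h = \mathrm{sgn}$ of the bracket converts ``the bracket vanishes $\mu_C$-almost everywhere'' into ``$\Phi_{f,g,h}(\lambda) = 0$'', and the attendant quantifier swap is handled by letting $f, g$ range over indicators of a countable generating $\pi$-system.

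Granting the equivalence, the lemma follows at once: $X_A \nIndep_{\PP_1} X_B \given X_C$ gives (by the equivalence at $\lambda = 1$) bounded $f_0, g_0, h_0$ with $\Phi_{f_0, g_0, h_0}(1) \ne 0$, so $\Phi := \Phi_{f_0, g_0, h_0}$ is a non-zero polynomial, hence has finitely many zeros in $[0,1]$ (the equivalence at $\lambda = 0$ together with $X_A \Indep_{\PP_0} X_B \given X_C$ also yields $\Phi(0) = 0$, but this is not needed); choosing $\lambda^* \in (0,1)$ below every positive zero of $\Phi$, we obtain $\Phi(\lambda) \ne 0$ and hence $X_A \nIndep_{\PP_\lambda} X_B \given X_C$ for all $\lambda \in (0, \lambda^*)$. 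The main obstacle is the measure-theoretic bookkeeping inside the equivalence: the existence of jointly measurable kernel densities, the fact that products of kernels dominated by kernels remain dominated with the product density, the clean handling of the disintegration (in particular of the set where the conditional density degenerates), and the quantifier swap. Once these are settled, the conceptual point — that polynomiality in $\lambda$ forbids the set $\{\lambda : X_A \Indep_{\PP_\lambda} X_B \given X_C\}$ from accumulating at $0$ — is immediate.
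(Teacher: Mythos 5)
Your proof is correct, but it takes a genuinely different route from the paper's. The paper works pointwise in the conditioning variable: it fixes witness sets $E_A, E_B$ and a set $E_C$ of positive $\PP_1$-measure on which the (denominator-cleared) factorisation defect $q(\lambda, x_C)$ is, for each fixed $x_C$, a non-trivial polynomial in $\lambda$; since the first positive root $\lambda^*(x_C)$ of this polynomial varies with $x_C$, it then needs an exhaustion argument (the sets $E_C^n := \{x_C \in E_C : \lambda^*(x_C) > 1/n\}$ with $\PP_1(E_C^n) \uparrow \PP_1(E_C) > 0$) to extract a single $\lambda^*$ that works on a set of positive measure. You instead integrate the defect over $x_C$ against a test function $h$, obtaining for each triple $(f,g,h)$ a single real polynomial $\Phi_{f,g,h}(\lambda)$ of degree at most $2d$; one non-vanishing witness at $\lambda = 1$ then immediately yields a uniform $\lambda^*$ with no exhaustion over $x_C$, and only the easy direction of your equivalence (independence implies all $\Phi$ vanish) is needed on $(0,\lambda^*)$. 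The price is shifted into the characterisation of conditional independence by the vanishing of the whole family $\{\Phi_{f,g,h}\}$ --- the quantifier swap via a countable generating $\pi$-system, the $\mathrm{sgn}$ trick, and the handling of the degenerate set where $\int q_\lambda(\cdot,x_C)\,\diff\kappa_{x_C}=0$ --- whereas the paper only needs a single pointwise witness inequality (plus its own measurability bookkeeping for $x_C \mapsto \lambda^*(x_C)$). Both arguments rest on the same underlying mechanism, namely that the expansion (\ref{eqn:expansion}) makes everything polynomial in $\lambda$, so that the set of $\lambda$ with conditional independence cannot accumulate at $0$; your version is cleaner at the root-counting step, at the cost of a heavier functional-analytic setup.
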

\begin{proof}
	Define $\QQ := \PP_0 + \PP_1$, and let $p_0, p_1, p_\lambda$ be densities of $\PP_0, \PP_1$ and $\PP_\lambda$ with respect to $\QQ$. Note that conditional dependence $X_A \nIndep_{\PP_1} X_B \given X_C$ is equivalent to the existence of measurable $E_A$ and $E_B$ such that $E_C' := \{x_C : \PP_1(X_A \in E_A, X_B\in E_B \given x_C) \neq \PP_1(X_A \in E_A \given x_C)\PP_1(X_B\in E_B \given x_C)\}$ has $\PP_1(E_C')>0$, hence there is a non-empty measurable subset $E_C \subseteq E_C'$ such that $p_1(x_C) > 0$ and $\PP_1(E_C) > 0$ for all $x_C\in E_C$.
	For $x_C\in E_C$ we have the equivalence
	\begin{align*}
		 & \PP_1(X_A\in E_A, X_B\in E_B \given X_C = x_C) \neq \PP_1(X_A\in E_A \given X_C = x_C)\PP_1(X_B\in E_B \given X_C = x_C)                                             \\
		 & \iff \int_{E_A\times E_B} p_1(x_A, x_B \given x_C) \diff\QQ(x_A, x_B) \neq \int_{E_A} p_1(x_A \given x_C) \diff\QQ(x_A) \int_{E_B} p_1(x_B \given x_C) \diff\QQ(x_B) \\
		 & \iff \int_{E_A\times E_B} p_1(x_A, x_B, x_C)p_1(x_C) \diff\QQ(x_A, x_B) \neq \int_{E_A} p_1(x_A, x_C) \diff\QQ(x_A) \int_{E_B} p_1(x_B, x_C) \diff\QQ(x_B).
	\end{align*}
	Define
	\begin{multline*}
		q(\lambda, x_C) := \int_{E_A\times E_B} p_\lambda(x_A, x_B, x_C)p_\lambda(x_C) \diff\QQ(x_A, x_B) \\
		- \int_{E_A} p_\lambda(x_A, x_C) \diff\QQ(x_A) \int_{E_B} p_\lambda(x_B, x_C) \diff\QQ(x_B),
	\end{multline*}
	for which we have $q(1, x_C) \neq 0$ for all $x_C \in E_C$. From (\ref{eqn:expansion}) we see that $q(\lambda, x_C)$ is a non-zero polynomial in $\lambda$ for every $x_C \in E_C$, and so $q(\lambda, x_C) \neq 0$ for all $\lambda \in (0,\lambda^*(x_C))$ with $\lambda^*(x_C) := \min \{\{1\}\cup R(x_C)\}$ where $R(x_C) := \{\lambda \in (0,1] : q(\lambda, x_C) = 0\}$. Since $q(\lambda, x_C)$ is a \emph{Carathéodory function} (continuous in $\lambda$ and measurable in $x_C$), we have by \cite{aliprantis2006infinite}, Corollary 18.8, that the correspondence $\bar{R}(x_C) := \{\lambda \in [0,1] : q(\lambda, x_C) = 0\}$ is \emph{weakly measurable} (i.e.\ $\{x_C : \bar{R}(x_C) \cap A \neq \emptyset\} \in \Bcal(\Xcal_C)$ for all open $A$). Since for each open $A$ we have $R(x_C) \cap A = \bar{R}(x_C)\cap (A\setminus \{0\})$, the correspondence $R(x_C)$ is weakly measurable as well. It then follows from \cite{aliprantis2006infinite}, Theorem 18.19 that the selection $\lambda^*(x_C)$ is measurable.
	Our goal is to show that there is a $\lambda^* \in (0,1)$ (independent of $x_C$) and a set $E_C^*\in \Bcal(\Xcal_C)$ with $\PP_\lambda(E_C^*) > 0$ and $q(\lambda, x_C)\neq 0$ for all $\lambda \in(0,\lambda^*)$ and all $x_C \in E_C^*$, which would imply that $X_A\nIndep_{\PP_\lambda} X_B \given X_C$ for all $\lambda \in (0,\lambda^*)$.\footnote{\label{footnote}It would be more straightforward to show that $\PP_\lambda(X_A\in E_A, X_B\in E_B\given X_C\in E_C) \neq \PP_\lambda(X_A\in E_A\given X_C\in E_C)\PP_\lambda(X_B\in E_B\given X_C\in E_C)$ for some $E_A, E_B, E_C$ with $\PP_\lambda(E_C)>0$, but this does not imply the conditional dependence $X_A\nIndep_{\PP_\lambda} X_B \given X_C$, hence the conditioning on the individual $x_C \in E_C$. See also \cite{neykov2021minimax}, p.3.}
	Define $E_C^n := \{x_C \in E_C : \lambda^*(x_C) > 1/n\}$, then $E_C^1\subseteq E_C^2 \subseteq ... \subseteq E_C$ with $\lim_n\PP_1(E_C^n) = \PP_1(E_C) > 0$, so there exists an $N$ such that $\PP_1(E_C^n) > 0$ for all $n\geq N$. Setting $\lambda^* := 1/N$ and $E_C^* := E_C^N$ we get $q(\lambda, x_C) \neq 0$ for all $\lambda \in (0,\lambda^*)$ for all $x_C \in E_C^*$. Since $\PP_1$ is absolutely continuous with respect to $\PP_\lambda$ for all $\lambda \in (0,1)$ we also have $\PP_\lambda(E_C^*) > 0$, implying that $X_A \nIndep_{\PP_\lambda} X_B \given X_C$ for all $\lambda \in (0,\lambda^*)$, which is the desired result.
\end{proof}

\begin{lemma}\label{thm:interpolation_convergence}
	Given two distributions $\PP_0, \PP_1\in M_G$, we have for the interpolation $\PP_\lambda$ from Definition \ref{def:markov_interpolation} that $\PP_\lambda\tvto \PP_0$ as $\lambda\to 0$.
\end{lemma}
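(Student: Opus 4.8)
The plan is to show directly that $d_{TV}(\PP_\lambda,\PP_0)\to 0$ by expressing $\PP_\lambda$ as a convex combination of Bayesian networks, one of which is exactly $\PP_0$, whose remaining total weight tends to $0$ as $\lambda\to 0$. Concretely, the product of Markov kernels $\bigotimes_{v\in V}(\cdot)$ is multilinear in its factors — this is just linearity of the integral defining $\otimes$, both in the integrated measure and in the integrating measure — so the expansion (\ref{eqn:expansion}) holds not only for densities but at the level of measures:
\[
\PP_\lambda \;=\; \sum_{\alpha\in\{0,1\}^V} \lambda^{|\alpha|}(1-\lambda)^{d-|\alpha|}\, \PP^{(\alpha)}, \qquad \PP^{(\alpha)} := \bigotimes_{v\in V}\PP_{\alpha_v}(X_v\mid X_{\pa(v)}),
\]
where $d=|V|$, $|\alpha|=\sum_v\alpha_v$, and in particular $\PP^{(\mathbf 0)}=\PP_0$. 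The coefficients are non-negative and sum to $((1-\lambda)+\lambda)^d=1$.

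Next I would isolate the $\alpha=\mathbf 0$ term. Writing $c_\alpha=c_\alpha(\lambda)=\lambda^{|\alpha|}(1-\lambda)^{d-|\alpha|}$ and using $1-c_{\mathbf 0}=\sum_{\alpha\neq\mathbf 0}c_\alpha$, we get $\PP_\lambda-\PP_0=\sum_{\alpha\neq\mathbf 0}c_\alpha(\PP^{(\alpha)}-\PP_0)$, so for every $E\in\Bcal(\Xcal_V)$,
\[
|\PP_\lambda(E)-\PP_0(E)| \;\le\; \sum_{\alpha\neq\mathbf 0}c_\alpha\,|\PP^{(\alpha)}(E)-\PP_0(E)| \;\le\; \sum_{\alpha\neq\mathbf 0}c_\alpha \;=\; 1-(1-\lambda)^d .
\]
Taking the supremum over $E$ gives $d_{TV}(\PP_\lambda,\PP_0)\le 1-(1-\lambda)^d$, and since $1-(1-\lambda)^d\to 0$ as $\lambda\to 0$ (indeed $1-(1-\lambda)^d\le d\lambda$), this yields $\PP_\lambda\tvto\PP_0$.

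The only genuinely non-trivial point is the first one: that the expansion of $\PP_\lambda$ into the $2^d$ ``pure'' Bayesian networks $\PP^{(\alpha)}$ is legitimate \emph{as an identity of probability measures}. A density-based justification is not directly available in the general standard Borel setting, because $\PP_\lambda$ need not be absolutely continuous with respect to any natural reference measure built from $\PP_0$ and $\PP_1$ (e.g.\ $\PP_0+\PP_1$): mixing the conditional kernels can place mass on sets that are null for both $\PP_0$ and $\PP_1$. I would therefore prove the expansion from the definition of $\otimes$, by induction along a topological ordering $v_1,\dots,v_d$, expanding one factor $(1-\lambda)\PP_0(X_{v_k}\mid X_{\pa(v_k)})+\lambda\PP_1(X_{v_k}\mid X_{\pa(v_k)})$ at a time and invoking linearity of $\int\cdot\,\diff(\cdot)$ in the integrating measure. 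An alternative route that avoids the expansion entirely is to prove sub-additivity of total variation along the DAG factorisation, $d_{TV}\big(\bigotimes_v K_v,\bigotimes_v K'_v\big)\le\sum_v\sup_{x_{\pa(v)}}d_{TV}\big(K_v(\cdot\mid x_{\pa(v)}),K'_v(\cdot\mid x_{\pa(v)})\big)$, via a telescoping hybrid argument plus the data-processing inequality for $d_{TV}$, and to apply it with $K_v=\PP_0(X_v\mid\cdot)$ and $K'_v=(1-\lambda)\PP_0(X_v\mid\cdot)+\lambda\PP_1(X_v\mid\cdot)$, noting $d_{TV}(K'_v,K_v)=\lambda\,d_{TV}(\PP_1(X_v\mid\cdot),\PP_0(X_v\mid\cdot))\le\lambda$, to obtain $d_{TV}(\PP_\lambda,\PP_0)\le d\lambda$; but the expansion argument above is shorter.
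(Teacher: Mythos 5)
Your argument is correct, and it takes a genuinely different route from the paper. The paper fixes the reference measure $\QQ=\PP_0+\PP_1$, reads off from the expansion (\ref{eqn:expansion}) that the density $p_\lambda$ converges pointwise to $p_0$ as $\lambda\to 0$, and invokes Scheff\'e's lemma to upgrade this to convergence in total variation. You instead keep everything at the level of measures: the multilinearity of $\bigotimes$ gives $\PP_\lambda=\sum_\alpha \lambda^{|\alpha|}(1-\lambda)^{d-|\alpha|}\PP^{(\alpha)}$ as an identity of probability measures, and isolating the $\alpha=\mathbf{0}$ term yields the explicit bound $d_{TV}(\PP_\lambda,\PP_0)\le 1-(1-\lambda)^d\le d\lambda$. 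Your version buys two things. First, it is quantitative, giving a Lipschitz-in-$\lambda$ rate rather than bare convergence. Second, your caution about absolute continuity is well placed and is precisely a caution about the paper's route: $\PP_\lambda$ need not be dominated by $\PP_0+\PP_1$ (e.g.\ with $1\to 2$, $\PP_0$ a point mass at $(0,0)$ built from $X_2=X_1$ and $\PP_1$ a point mass at $(1,2)$ built from $X_2=X_1+1$, the mixture charges $(0,1)$ and $(1,1)$, which are null for $\PP_0+\PP_1$), and even when a dominating $\QQ$ exists one must check that the conditional densities in (\ref{eqn:expansion}) make sense relative to it (this is cleanest for a product reference measure). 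The density/Scheff\'e argument can be repaired by choosing $\QQ$ to dominate all $2^d$ pure networks $\PP^{(\alpha)}$, but your measure-level expansion sidesteps the issue entirely, so your worry does not translate into a gap in your own proof. The one step you should write out in full is the claimed multilinearity of the kernel product; the induction along a topological ordering that you sketch, using linearity of $\int\cdot\,\diff(\cdot)$ separately in the integrated kernel and in the integrating measure, does the job.
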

\begin{proof}
	Define $\QQ := \PP_0 + \PP_1$, and let $p_0, p_1, p_\lambda$ be densities of $\PP_0, \PP_1$ and $\PP_\lambda$ with respect to $\QQ$.
	From (\ref{eqn:expansion}) we get the expression
	\begin{equation*}
		p_\lambda(x_V) = (1-\lambda)^{d}p_0(x_V) + \sum_{\substack{\alpha\in \{0,1\}^d\\|\alpha|>0}} (1-\lambda)^{d-|\alpha|}\lambda^{|\alpha|}p_{\alpha_d}(x_{v_d}\given x_{\pa(v_d)})... p_{\alpha_1}(x_{v_1})
	\end{equation*}
	so we have pointwise convergence $p_\lambda(x_V) \to p_0(x_V)$ as $\lambda \to 0$. By \cite{scheffe1947useful} we conclude that $\PP_\lambda \tvto \PP_0$.
\end{proof}

\subsection{Typicality of faithfulness in the space of Bayesian networks}\label{sec:bn_params}
In this section we extend Theorem \ref{thm:bn_dist_nowhere_dense} from the space of observational distributions to the space of Bayesian networks, defined as follows:
\begin{definition}
	Given a DAG $G$ with finite index set $V$ and standard Borel space $\Xcal_v$ for every $v\in V$, \emph{the space of Bayesian networks with graph $G$} is defined as
	\begin{equation*}
		\BN_G := \prod_{v\in V}\left\{\PP(X_v \given X_{\pa(v)}) : \Xcal_{\pa(v)} \to \Pcal(\Xcal_v) \text{ measurable}\right\}.
	\end{equation*}
\end{definition}
The faithfulness of a Bayesian network is a property of its observational distribution $\PP \in M_G$. To formalise the relation between the Bayesian network and the observational distribution we introduce the \emph{distribution} map, defined as
\begin{equation*}
	D: \BN_G \to M_G, \quad (\PP(X_v\given X_{\pa(v)}))_{v\in V} \mapsto \bigotimes_{v\in V} \PP(X_v \given X_{\pa(v)}).
\end{equation*}
We are interested in whether the faithful Bayesian networks $D^{-1}(F_G)$ are typical in $\BN_G$. To get a well-defined notion of typicality we require a topology on $\BN_G$, which we introduce via the following metric.
\begin{definition}
	For $m, m' \in \BN_G$, consider the following metric:
	\begin{equation*}
		d_{TV}^\circ(m, m') := \sum_{v\in V}\sup_{x_{\pa(v)}}d_{TV}(\PP_m(X_v\given x_{\pa(v)}), \PP_{m'}(X_v\given x_{\pa(v)})).
	\end{equation*}
	One readily verifies that this is indeed a metric.
\end{definition}
This metric measures the worst-case total variation distance between corresponding Markov kernels, summed over all vertices. Convergence $d_{TV}^\circ(m_n, m)\to 0$ means that all conditional distributions $\PP_{m_n}(X_v\given x_{\pa(v)})$ converge uniformly in $x_{\pa(v)}$ to $\PP_m(X_v\given x_{\pa(v)})$ in total variation.
Note that this is a strong notion of closeness: it requires agreement of the Markov kernels for \emph{all} parent values, including those that might have zero probability under the observational distribution. Consequently, our metric distinguishes Bayesian networks that have the same observational distribution but differ on a measure-zero set of parent values. This metric is natural for Bayesian networks as causal models, where by intervening the Markov kernels represent mechanisms which could be evaluated on \emph{all} parent values --- not just those occurring under a specific distribution of the parent variables.

\begin{lemma}\label{thm:dist_map_continuous}
	The distribution map $D: (\BN_G, d_{TV}^\circ) \to (M_G, d_{TV})$ is continuous.
\end{lemma}
\begin{proof}
	Let $m_0, m_1\in \BN_G$ and write $\PP_0 := D(m_0)$ and $\PP_1 := D(m_1)$. Let $v_1, ..., v_d$ be a reverse topological ordering of $G$ (which has $\pa(v_k)\subseteq v_{k+1}, ..., v_d$), and let $\QQ_k := \bigotimes_{i=1}^k \PP_1(X_{v_i}\given X_{\pa(v_i)}) \otimes \bigotimes_{i=k+1}^d \PP_0(X_{v_i}\given X_{\pa(v_i)})$, then $\QQ_0 = \PP_0$ and $\QQ_d = \PP_1$, so we have $d_{TV}(\PP_0, \PP_1) \leq \sum_{k=1}^d d_{TV}(\QQ_{k-1}, \QQ_{k})$ by the triangle inequality.
	Given a measurable function $f: \Xcal_V\to [-1,1]$, define the functions
	\begin{align*}
		g_f^k(x_{v_{k}}, ..., x_{v_{d}}) & := \int f(x_{v_1}, ..., x_{v_d})\diff \bigotimes_{i=1}^{k-1}\PP_1(X_{v_i}\given X_{\pa(v_i)})      \\
		h_f^k(x_{v_k}, x_{\pa(v_{k})})   & := \int g_f^k(x_{v_k}, x_{\pa(v_k)}, ...)\diff \PP_0(X_{\{v_{k+1}, ..., v_d\}\setminus \pa(v_k)}),
	\end{align*}
	which are bounded and measurable. Note that $\QQ_{k-1}$ and $\QQ_{k}$ only differ at the Markov kernel for $v_k$, having $\PP_0(X_{v_k} \given X_{\pa(v_k)})$ and $\PP_1(X_{v_k} \given X_{\pa(v_k)})$ at that position respectively. By taking the supremum over all measurable $f:\Xcal_V \to [-1,1]$ we then bound $d_{TV}(\QQ_{k-1}, \QQ_{k}) $ as
	\begin{align*}
		 & \sup_f \left|\int fd\QQ_{k-1} - \int f\diff\QQ_{k}\right|                                                                                                                                                                     \\
		 & = \sup_f \left|\int\left(\int g_f^k \diff\PP_0(X_{v_k} \given X_{\pa(v_k)}) - \int g_f^k \diff\PP_1(X_{v_k} \given X_{\pa(v_k)})\right) \diff\PP_0(X_{v_{k+1}}, ..., X_{v_d})\right|                                          \\
		 & \leq \sup_{x_{\pa(v_k)}} \sup_f \left|\int\left(\int g_f^k \diff\PP_0(X_{v_k} \given x_{\pa(v_k)}) - \int g_f^k \diff\PP_1(X_{v_k} \given x_{\pa(v_k)})\right) \diff\PP_0(X_{\{v_{k+1}, ..., v_d\}\setminus \pa(v_k)})\right| \\
		 & = \sup_{x_{\pa(v_k)}} \sup_f \left|\int h_f^k(x_{v_k}, x_{\pa(v_k)})\diff\PP_0(X_{v_k} \given x_{\pa(v_k)}) - \int h_f^k(x_{v_k}, x_{\pa(v_k)}) \diff\PP_1(X_{v_k} \given x_{\pa(v_k)})\right|                                \\
		 & \leq \sup_{x_{\pa(v_k)}}d_{TV}(\PP_0(X_{v_k} \given x_{\pa(v_k)}),  \PP_1(X_{v_k} \given x_{\pa(v_k)})),
	\end{align*}
	using Fubini's Theorem. This gives $d_{TV}(\PP_0(X_V), \PP_1(X_V)) \leq d_{TV}^\circ(m_0, m_1)$.
\end{proof}

Contrasting with the typicality of faithful distributions of Bayesian networks, we now show that Bayesian networks themselves are typically faithful.
\begin{theorem}\label{thm:bn_params_nowhere_dense}
	The set of faithful Bayesian networks $D^{-1}(F_G)$ is open and dense in $(\BN_G, d_{TV}^\circ)$.
\end{theorem}
\begin{proof}
	From Lemma \ref{thm:dist_map_continuous} and Theorem \ref{thm:bn_dist_nowhere_dense} it follows that the set of faithful Bayesian networks $D^{-1}(F_G)$ is open and non-empty. Let $m_0 \in \BN_G$, $m_1 \in D^{-1}(F_G)$ and $m_\lambda := (1-\lambda)m_0 + \lambda m_1$, then
	\begin{align*}
		d_{TV}^\circ(m_\lambda, m_0) & = \sum_{v\in V} \sup_{x_{\pa(v)}} d_{TV}(\PP_{\lambda}(X_v\given x_{\pa(v)}), \PP_{0}(X_v\given x_{\pa(v)})) \\
		                             & = \lambda \sum_{v\in V} \sup_{x_{\pa(v)}} d_{TV}(\PP_1(X_v\given x_{\pa(v)}), \PP_{0}(X_v\given x_{\pa(v)})) \\
		                             & = \lambda d_{TV}^\circ(m_1, m_0)\to 0
	\end{align*}
	as $\lambda \to 0$. By repeated application of Lemma \ref{thm:interpolation_dependence} for every $d$-connection in $G$, there is a $\lambda^*\in(0,1)$ such that $m_\lambda \in F_G$ for all $\lambda \in (0,\lambda^*)$, so $D^{-1}(F_G)$ is dense in $\BN_G$.
\end{proof}

\section{Conditional exponential family parametrisations}\label{sec:exp}
The preceding section raises the question whether the topological typicality of faithful Bayesian networks also holds for specific parametrisations of Bayesian networks. In this section we answer this question in the affirmative, by extending the results of \cite{spirtes1993causation} and \cite{meek1995strong} to sufficiently regular conditional exponential family parametrisations of Bayesian networks.

Formally, a \emph{parametrisation} of a Bayesian network with graph $G$ is a set $\Theta\subseteq \RR^d$ for some $d\in\NN$ and a map
\begin{equation*}
	\varphi: \Theta \to \BN_G, \quad \theta \mapsto (\PP_\theta(X_v\given X_{\pa(v)}))_{v\in V}.
\end{equation*}
The corresponding map from the parameter values to the observational distribution is defined as
\begin{equation*}
	T:\Theta \to M_G, \quad T:= D\circ \varphi.
\end{equation*}
We consider the question whether the set of \emph{faithful parameters} $T^{-1}(F_G)$ is typical in $\Theta$ with respect to the Euclidean topology, in particular for the following class of conditional exponential family Bayesian networks, inspired by \cite{feigin1981conditional}.
\begin{definition}
	Given a DAG $G$ with vertices $V$ and sample space $\Xcal_v \subseteq \RR$ for each $v\in V$, a \emph{conditional exponential family Bayesian network} has for each $v\in V$ a conditional density
	\begin{equation*}
		p_{\theta_v}(x_v\given x_{\pa(v)}) = b_v(x_v, x_{\pa(v)})e^{\eta_v(\theta_v)^\top t_v(x_v, x_{\pa(v)}) - A_v(\eta_v(\theta_v), x_{\pa(v)})}
	\end{equation*}
	with respect to a given locally finite dominating measure $\mu_v$ on $\Xcal_v$, with parameter space $\Theta_v\subseteq \RR^{d_v}$ for some $d_v\in\NN$, a function $b_v : \Xcal_v\times\Xcal_{\pa(v)}\to [0,\infty)$, \emph{natural parameter} $\eta_v : \Theta_v \to \RR^{k_v}$ for some $k_v\in\NN$ and \emph{sufficient statistic} $t_v : \Xcal_v\times \Xcal_{\pa(v)} \to \RR^{k_v}$ such that $A_v(\eta_v(\theta_v), x_{\pa(v)}) < \infty$ for all $\theta_v\in \Theta_v$ and $x_{\pa(v)}\in\Xcal_{\pa(v)}$, where $A_v(\eta_v, x_{\pa(v)}) := \log\int b_v(x_v, x_{\pa(v)})e^{\eta_v^\top t_v(x_v, x_{\pa(v)})}\diff \mu_v(x_v)$.
\end{definition}
This gives rise to a joint density $p_\theta(x_V) = \prod_{v\in V}p_{\theta_v}(x_v\given x_{\pa(v)})$ of the distribution $\PP_{\theta}(X_V) := T(\theta) \in M_G$ and the joint parameter space $\Theta := \prod_{v\in V}\Theta_v$. This model class allows the modelling of mixed data types, see e.g.\ \cite{yang2014mixed}.

\begin{example}\label{ex:exp_gauss}
	For linear Gaussian Bayesian networks, \cite{spirtes1993causation} parametrise for each $v\in V$ the conditional distribution $\PP_\theta(X_v \given X_{\pa(v)} = x_{\pa(v)}) = \Ncal(\beta_v^\top x_{\pa(v)}, \sigma_v^2)$ by a linear coefficient $\beta_v$ and a strictly positive variance $\sigma_v^2$.
	This gives the parameter space
	\begin{equation*}
		\Theta_\Ncal := \prod_{v\in V} \left\{(\beta_v, \sigma_v^2) \in \RR^{|\pa(v)|} \times \RR_{>0}\right\},
	\end{equation*}
	so when writing $\theta_v = (\beta_v, \sigma_v^2)$ it has sufficient statistic $t_v(x_v, x_{\pa(v)})^\top = (x_vx^\top_{\pa(v)}, x_v^2)$, natural parameter $\eta_v(\theta_v)^\top = (\beta_v^\top/\sigma_v^2, -1/(2\sigma_v^2))$, $b_v = 1$ and as dominating measure $\mu_v$ the Lebesgue measure.
\end{example}

\begin{example}\label{ex:exp_discrete}
	For discrete distributions with finite state space, \cite{meek1995strong} considers for each conditional distribution $\PP_\theta(X_v \given X_{\pa(v)} = x_{\pa(v)})$ a parameter $\theta_{v, x_{\pa(v)}}$ in the $|\Xcal_v|-1$-dimensional simplex $\Delta^{|\Xcal_v|}$. This gives the parameter space
	\begin{equation*}
		\Theta_\Dcal := \prod_{v\in V} \prod_{x_{\pa(v)} \in \Xcal_{\pa(v)}} \left\{\theta_{v, x_{\pa(v)}} \in \Delta^{|\Xcal_v|}\right\}.
	\end{equation*}
	The sufficient statistic is the vector $t_v(x_v, x_{\pa(v)})$ of length $|\Xcal_{\pa(v)}|\times |\Xcal_{v}|$ with entry 1 at the $(x_{\pa(v)}, x_v)$ position and zeros elsewhere, the natural parameter $\eta_v(\theta_v)$ is given by the vector with entry $\log(\theta_{v, x_{\pa(v)}, x_v})$ for every $(x_{\pa(v)}, x_v)$ pair, $b_v = 1$, and as dominating measure $\mu_v$ the counting measure.
\end{example}

\subsection{Typicality of faithfulness in the parameter space}\label{sec:exp_params}
To obtain that faithful parameters are typical we require that the conditional independence constraints of faithfulness violations are highly restrictive. In Theorem \ref{thm:bn_dist_nowhere_dense} (in particular, Lemma \ref{thm:interpolation_dependence}) we leveraged that conditional independence is a polynomial constraint in the interpolation parameter, but this approach is not feasible for conditional exponential families since these are not closed under taking mixtures. Instead, we ensure that each marginal density $p_\theta(x_A)$ is an analytic function in the parameter $\theta$, and hence that conditional independence is the zero set of an analytic function, which is atypical in the parameter space.

\begin{theorem}\label{thm:margdensity_params_nowhere_dense}
	Given DAG $G$ with vertices $V$, let $\mu_v$ be a $\sigma$-finite measure for each $v\in V$ and let $\varphi : \Theta \to \BN_G$ be a Bayesian network parametrisation with $\Theta$ open and connected, such that for each $A\subseteq V$ the marginal distribution $\PP_\theta(X_A)$ has a density $p_\theta(x_A)$ with respect to $\mu_A := \bigotimes_{v\in A}\mu_v$ which is $\mu_A$-a.s.\ continuous in $x_A$ and analytic in $\theta$, and such that its support does not depend on $\theta$. If there is at least one faithful parameter in $\Theta$, then the set of faithful parameters is open and dense in $\Theta$, and the unfaithful parameters have Lebesgue measure zero.
\end{theorem}
\begin{proof}
	Let $A, B, C\subseteq V$ such that $A\nperp_G^d B\given C$, and let $\theta_1\in \Theta$ be a parameter such that $X_A\nIndep_{\PP_{\theta_1}} X_B \given X_C$, which exists by assumption. Let $x_A, x_B, x_C$ lie in the support of $\PP_{\theta_1}$ such that $p_{1}(x_A, x_B, x_C)p_{1}(x_C) - p_{1}(x_A, x_C)p_{1}(x_B, x_C)\neq 0$  and define
	\begin{equation*}
		q(\theta) :=  p_{\theta}(x_A, x_B, x_C)p_{\theta}(x_C) - p_{\theta}(x_A, x_C)p_{\theta}(x_B, x_C).
	\end{equation*}
	By assumption the support of $\PP_\theta$ does not vary in $\theta$, so $x_A, x_B$ and $x_C$ are in the support of $\PP_\theta$ for any $\theta\in\Theta$.
	By continuity of the densities we have $q(\theta_1) \neq 0$ and $q(\theta) = 0$ for all $\theta$ such that $X_A\Indep_{\PP_\theta} X_B\given X_C$, i.e. $\{\theta \in \Theta : X_A\Indep_{\PP_\theta} X_B\given X_C\}\subseteq \{\theta \in \Theta : q(\theta) = 0\}$. It follows from the identity theorem\footnote{Every real analytic function on a connected open domain in $\RR^n$ can be extended to a holomorphic function on a connected open domain in $\CC^n$, for which the identity theorem of holomorphic functions applies \citep{gunning1965analytic}. This identity theorem for functions of real variables is obtained by restricting again to $\RR^n$.} that the zero set of a nonconstant real analytic function on an open and connected domain is nowhere dense (if it were dense on an open set, then by continuity the function would be 0 on that open set, and hence zero on the whole domain), and has Lebesgue measure zero \citep{mityagin2020zero}. Hence, $T^{-1}(U_G) = \bigcup_{A\nperp^d_G B\given C}\{\theta \in \Theta: X_A \Indep_{\PP_\theta} X_B\given X_C\}$ is nowhere dense and has Lebesgue measure zero. Finally, since the unfaithful parameters are nowhere dense, the set of faithful parameters is dense. Since $\theta \mapsto p_\theta(x_V)$ is continuous, it follows from Scheffé's Theorem that $\theta \mapsto \PP_\theta(X_V)$ is continuous with respect to the total variation metric, so by Theorem \ref{thm:ci_is_closed} the set of faithful parameters is open.
\end{proof}

\begin{remark}
	The proof of Theorem \ref{thm:margdensity_params_nowhere_dense} only requires that for every $d$-connection $A\nperp^d_G B\given C$ in the graph there is a parameter $\theta\in \Theta$ such that $X_A\nIndep_{\PP_\theta} X_B\given X_C$. Given the required analyticity, it follows from the preceding theorem that this is equivalent to the existence of a parameter that is faithful. For a specific parametrisation, the former condition might be easier to prove than the latter --- this strategy has also been employed in the original proofs of Theorems \ref{thm:spirtes} and \ref{thm:meek}.
\end{remark}

The following result shows that for conditional exponential family parametrisations whose joint distribution lies in a regular exponential family, the analyticity property of the marginal $p_\theta(x_A)$ is ensured if the parametrisation has analytic natural parameters $\eta_v$. An exponential family is \emph{regular} if the density $p_\theta(x_V)$ (with respect to $\mu_V$) is proportional to $\tilde{b}(x_V)e^{\tilde{\eta}(\theta)^\top \tilde{t}(x_V)}$ for some non-negative function $\tilde{b}$ and vector-valued functions $\tilde{\eta}$ and $\tilde{t}$ such that the parametrisation is \emph{minimal} (i.e., the components of $\tilde{\eta}$ are affinely independent and the components of $\tilde{t}$ are $\mu$-a.s.\ affinely independent), \emph{full} (the natural parameters satisfy $\tilde{\eta}(\Theta) = \{\gamma : \int \tilde{b}(x_V)e^{\gamma^\top \tilde{t}(x_V)}\diff\mu(x_V) < \infty\}$) and $\tilde{\eta}(\Theta)$ is open --- see \citealp{barndorff-nielsen2014information} (page 116).

\begin{theorem}\label{thm:exp_params_nowhere_dense}
	Given DAG $G$ with vertices $V$, consider a conditional exponential family parametrisation of the Bayesian network such that for each $v\in V$ the set $\Theta_v$ is open and connected and the natural parameter $\theta\mapsto \eta_v(\theta)$ is analytic with open image $\eta_v(\Theta_v)$, and the joint density $p_\theta(x_V)$ lies in a regular exponential family such that for every $A\subseteq V$ the marginal density $p_\theta(x_A)$ is $\mu_A$-a.s.\ continuous. If there is at least one faithful parameter in $\Theta$, then the set of faithful parameters is open and dense, and the unfaithful parameters have Lebesgue measure zero.
\end{theorem}
\begin{proof}
	The map $\gamma \mapsto e^{A_v(\gamma, x_{\pa(v)})} = \int_{\Xcal_v} b_v(x_v, x_{\pa(v)})e^{\eta_v^\top t_v(x_v, x_{\pa(v)})}\diff\mu_v(x_v)$ can alternatively be written as the complex Fourier-Laplace transform of the push-forward measure $d\nu := d (t_v)_*(b_v\mu_v)$ given by $\gamma \mapsto\int e^{\gamma^\top y}\diff \nu (y)$, defined on $\Theta_v' := \{\gamma + i \zeta: \gamma \in \eta_v(\Theta_v), \zeta \in \RR^{k_v} \}$. Since the range of $\eta_v$ is open, it follows from \cite{barndorff-nielsen2014information}, Theorem 7.2 that this map is holomorphic on $\Theta_v'$.\footnote{Formally \cite{barndorff-nielsen2014information} Theorem 7.2 assumes $\nu$ to be a probability measure, but the result nevertheless holds when $\nu$ is $\sigma$-finite.} As a composition of analytic functions, $p_v(x_v\given x_{\pa(v)})$ is analytic in $\theta$. As a product of analytic functions, the joint density $p_\theta(x_V) = \tilde{b}(x_V)e^{\tilde{\eta}(\theta)^\top \tilde{t}(x_V) - \tilde{A}(\tilde{\eta}(\theta))}$ is analytic in $\theta$ as well. By minimality of the exponential family $p_\theta(x_V)$ there are $x_V^0, x_V^1, ..., x_V^k$ such that all vectors $u_i := \tilde{t}(x_V^i) - \tilde{t}(x_V^0)$ are linearly independent. We can write $U^\top \tilde{\eta}(\theta) = g(\theta)$ with $U := \left(u_1, ..., u_k\right)$ invertible and $g_i(\theta) := \log(p_\theta(x_V^i)) - \log(p_\theta(x_V^0)) + C_i$ analytic for some constant $C_i$, hence $\tilde{\eta}(\theta)$ is analytic as well, as a product of analytic functions.
	Similar as before, it follows from \cite{barndorff-nielsen2014information}, Theorem 7.2 that for every $I\subseteq V$, the map $\eta \mapsto \int_{\Xcal_I}\tilde{b}(x_V)e^{\eta^\top \tilde{t}(x_V)}\diff\mu(x_I)$ is holomorphic on a domain which by regularity of the exponential family contains $\tilde{\eta}(\Theta)$. This implies analyticity of the unnormalised density $\tilde{p}_\theta(x_A) = \int_{\Xcal_{V\setminus A}}\tilde{b}(x_V)e^{\tilde{\eta}(\theta)^\top \tilde{t}(x_V)}\diff\mu(x_{V\setminus A})$ and the log partition function $\tilde{A}(\eta) = \log\int_{\Xcal_V}\tilde{b}(x_V)e^{\tilde{\eta}(\theta)^\top \tilde{t}(x_V)}\diff\mu(x_V)$, hence also of the density $p_\theta(x_A)$.
	The final claim follows from Theorem \ref{thm:margdensity_params_nowhere_dense}.
\end{proof}

\begin{remark}\label{rem:exp_faithful_typical_convex}
	When $\Theta$ is the closure of an open convex set, a statement similar to Theorem \ref{thm:exp_params_nowhere_dense} holds. Since the boundary of an open convex set is nowhere dense and has Lebesgue measure zero \citep{lang1986note}, we obtain that if there is a faithful parameter in $\mathrm{int}(\Theta)$, the unfaithful parameters are nowhere dense and have Lebesgue measure zero in $\mathrm{int}(\Theta)$, hence also in $\Theta$.
\end{remark}

For conditional exponential family parametrisations, the joint distribution does not automatically lie in a regular exponential family. For example, when $Y|X=x \sim \textrm{Gamma}(\theta_1 x, 1)$ and $X\sim \textrm{Exp}(\theta_2)$, the joint density is proportional to $\exp(-\theta_2 x + (\theta_1 x -1) \log(y) - y - \log(\Gamma(\theta_1 x)))$, and this exponent cannot be written as the product of a function of the parameters and a function of the data, so it's not in an exponential family. Positive examples are the discrete and Gaussian Bayesian networks, or the network specified by $Y|X=x \sim \textrm{Exp}(\theta_1 x)$ and $X\sim \textrm{Exp}(\theta_2)$.

\vspace{1em}
Given that \cite{spirtes1993causation} and \cite{meek1995strong} have proven for every DAG $G$ the existence of faithful parameters in $\Theta_\Ncal$ and in the interior of $\Theta_\Dcal$, and given that these models induce joint distributions which are minimal exponential families, we obtain Theorems \ref{thm:spirtes} and \ref{thm:meek} and their topological analogues as corollaries of Theorem \ref{thm:exp_params_nowhere_dense} and Remark \ref{rem:exp_faithful_typical_convex}:

\begin{corollary}\label{thm:spirtes_nowhere_dense}
	The set of faithful parameters $\{\theta \in \Theta_\Ncal : T(\theta) \in F_G\}$ is open and dense and the set of unfaithful parameters $\{\theta \in \Theta_\Ncal : T(\theta) \in U_G\}$ has Lebesgue measure zero.
\end{corollary}
\begin{corollary}\label{thm:meek_nowhere_dense}
	The set of faithful parameters $\{\theta \in \Theta_\Dcal : T(\theta) \in F_G\}$ is open and dense\footnote{That this set is open and dense has also been shown by \cite{lin2020learning}. From Remark \ref{rem:exp_faithful_typical_convex} we can merely conclude that the unfaithful parameters are nowhere dense, but since by Scheffé's theorem the map $T$ is continuous, the set of unfaithful parameters is \emph{closed} and nowhere dense, hence the faithful parameters are open and dense.} and the set of unfaithful parameters $\{\theta \in \Theta_\Dcal : T(\theta) \in U_G\}$ has Lebesgue measure zero.
\end{corollary}


\subsection{Typicality of faithfulness in the space of observational distributions}\label{sec:exp_dists}
Similar to Section \ref{sec:bn_dists}, we also investigate the typicality of faithful observational distributions. This is considered with respect to the set of distributions induced by a given conditional exponential family parametrisation $(\Theta, b, \eta, t, \mu)$, denoted by $M_G^{\mathrm{exp}} := T(\Theta)$. The faithful distributions are denoted by $F_G^{\mathrm{exp}} := T(\Theta)\cap F_G$. For this set of observational distributions, the total variation topology coincides with the weak topology, so in contrast with the result in Section \ref{sec:bn_dists}, we now obtain that faithfulness is open and dense in the weak topology.
\begin{theorem}\label{thm:exp_dist_nowhere_dense}
	Consider a conditional exponential family Bayesian network parametrisation satisfying the assumptions of Theorem \ref{thm:exp_params_nowhere_dense}, then the total variation topology and the weak topology coincide on $M_G^{\mathrm{exp}}$. If there is at least one faithful parameter in $\Theta$, then the set of faithful distributions $F_G^{\mathrm{exp}}$ is open and dense in $M_G^{\mathrm{exp}}$, in either topology.
\end{theorem}
\begin{proof}
	Let $\tilde{\eta}(\theta)$ denote the natural parameter of the minimal parametrisation of the joint density $p_\theta(x_V)$, and let $\tilde{t}$ denote the sufficient statistic. The $\mu_V$-a.e.\ continuity of $p_{\theta}(x_V)$ in $x_V$ implies that $\tilde{t}$ is $\mu_V$-a.e.\ continuous as well, analogous to why analyticity of $\tilde{\eta}(\theta)$ follows from the analyticity of $p_\theta(x_V)$ due to the minimality of the exponential family in Theorem \ref{thm:exp_params_nowhere_dense}. Hence, by \cite{barndorff-nielsen2014information}, Theorem 8.3, we have that $\tilde{\eta}(\Theta)$ and $M_G^{\mathrm{exp}}$ are homeomorphic, where $M_G^{\mathrm{exp}}$ is equipped with the weak topology. Note that Barndorff-Nielsen actually shows that the convergence $\eta_n \to \eta$ in $\tilde{\eta}(\Theta)$ implies $\PP_{\eta_n} \tvto \PP_\eta$, so the weak topology and the total variation topology coincide on $M_G^{\mathrm{exp}}$.

	If there is a faithful parameter, then by Theorem \ref{thm:exp_params_nowhere_dense} the faithful parameters are dense in $\Theta$. From the proof of Theorem \ref{thm:exp_params_nowhere_dense} we have that the natural parameter $\theta\mapsto \tilde{\eta}(\theta)$ is analytic and hence continuous. The faithful parameters are dense in $\tilde{\eta}(\Theta)$, hence also in $M_G^{\mathrm{exp}}$, that is, $F_G^{\mathrm{exp}}$ is dense in $M_G^{\mathrm{exp}}$. By Theorem \ref{thm:ci_is_closed} conditional independence is closed in $M_G^{\mathrm{exp}}$, and hence $F_G^{\mathrm{exp}}$ is open in $M_G^{\mathrm{exp}}$.
\end{proof}

\section{Nonparametric conditional density models}\label{sec:equicont}
Due to their parametric nature, exponential families might not be flexible enough for certain statistical applications. In this section, we consider distributions on the separable complete metric spaces $\Xcal_V = \prod_{v\in V}\Xcal_v$ that have uniformly continuous and uniformly bounded (conditional) densities with respect to a given locally finite measure $\mu_V := \bigotimes_{v\in V}\mu_v$. A function $f:\Xcal_V \to \RR$ is uniformly continuous if it admits a \emph{modulus of continuity}, i.e.\ an increasing function $\omega : [0,\infty) \to [0,\infty)$ with $\lim_{t \to 0}\omega(t) = \omega(0)=0$ such that $|f(x) - f(x')| \leq \omega(d(x, x'))$ for all $x,y$, where $d$ denotes the metric on $\Xcal_V$. Throughout we exclude moduli of continuity with $\lim_{t\downarrow 0}\omega(t)/t=0$, which only admit constant functions. Classes of uniformly continuous functions which all admit the same modulus of continuity are called \emph{uniformly equicontinuous}.

\begin{definition}
	Given a DAG $G$ with vertices $V$, a measure $\mu_v$ for each $v\in V$, modulus of continuity $\omega$ and $K > 0$, the class of \emph{equicontinuous and bounded Bayesian networks $\BN_G^{\mathrm{eqb}}$ induced by the tuple $(K, \omega, \mu)$} has for each $v\in V$ a conditional density $(x_v, x_{\pa(v)})\mapsto p(x_v \given x_{\pa(v)})$ with respect to $\mu_v$ which is bounded by $K$ and admits modulus of continuity $\omega$.
\end{definition}

Given a tuple $(K, \omega, \mu)$, let $\Pcal^{\mathrm{eqb}}(\Xcal_V)\subseteq \Pcal(\Xcal_V)$ be the set of probability measures which have a density with respect to $\mu$ which are bounded by $K$ and admit the modulus of continuity $\omega$.
The following is essentially a reformulation of \cite{boos1985converse}, Lemma 1, extending the original result to more general sample spaces.
\begin{lemma}\label{thm:equicont_weak_tv_coincide}
	The weak topology and total variation topology coincide on $\Pcal^{\mathrm{eqb}}(\Xcal_V)$, which is closed in $\Pcal(\Xcal_V)$.\footnote{This also holds if the sample spaces are separable metric spaces and the dominating measure $\mu_V$ is Radon.}
\end{lemma}
\begin{proof}
	Let $\PP_n \wto \PP$ weakly with $\PP_n\in \Pcal^{\mathrm{eqb}}(\Xcal_V)$ for all $n\in\NN$. By Ascoli's theorem (\citealp{munkres2014topology}, Theorem 47.1), the class of uniformly bounded and uniformly equicontinuous densities is relatively compact in the topology of uniform convergence on compacta. In particular, for any subsequence $n'$ there is a further subsequence $n''$ and a $p^*: \Xcal_V \to [0,\infty)$ such that $p_{n''} \to p^*$ uniformly on compacta. This implies that $p^*$ has modulus of continuity $\omega$ and is uniformly bounded by $K$. By Fatou's Lemma we have $\int p^*\diff\mu \leq \lim_{n''}\int p_{n''}\diff\mu \leq 1$. Since $\PP_{n''} \wto \PP$, the sequence $(\PP_{n''})$ is uniformly tight by Prokhorov's theorem (\citealp{bogachev2007measurea}, Theorem 8.6.4). For any $\varepsilon > 0$, let $K_\varepsilon$ be a compact set with $\PP_{n''}(K_\varepsilon^c) \leq \varepsilon$ for all $n''$ in the further subsequence. Since $\mu$ is locally finite, we have $\mu(K_\varepsilon) < \infty$. Since $p_{n''} \to p^*$ uniformly on $K_\varepsilon$ and $\mu(K_\varepsilon) < \infty$ we have $\int p^* d\mu \geq \int_{K_\varepsilon} p^* d\mu = \lim_{n''} \int_{K_\varepsilon} p_{n''} d\mu \geq 1 - \varepsilon$, so $\int p^* d\mu \geq 1$. Hence $p^*$ integrates to 1, and so $\PP^*\in \Pcal^{\mathrm{eqb}}(\Xcal_V)$.
	By \cite{scheffe1947useful}, convergence of the densities implies weak convergence $\PP_{n''}\wto \PP^*$. The weak convergence $\PP_n\wto \PP$ also implies convergence of the subsequence $\PP_{n''} \wto \PP$, and thus $p = p^*$ $\mu$-a.e. The $\mu$-a.e.\ convergence $p_{n''} \to p$ implies convergence $p_n \to p$ as well (otherwise there exists a subsequence $n'$ with $|p_{n'} - p| > \varepsilon$ on some set with positive $\mu$-measure, which contradicts the existence of a convergent further subsequence), which implies total variation convergence $\PP_{n} \tvto \PP$, again by Scheffé's Theorem.
\end{proof}

For a specific class of equicontinuous and bounded Bayesian networks $\BN_G^{\mathrm{eqb}}$, the induced set of observational distributions is given by $M_G^{\mathrm{eqb}} := D(\BN_G^{\mathrm{eqb}})$. For each model in $\BN_G^{\mathrm{eqb}}$, the corresponding joint density $p(x_V)$ in $M_G^{\mathrm{eqb}}$ admits the modulus of continuity $\omega' := |V| K \omega$ and bound $K' := 1\vee K^{|V|}$, so $M_G^{\mathrm{eqb}}$ (with parameters $(K, \omega, \mu)$) is contained in $\Pcal^{\mathrm{eqb}}(\Xcal_V)$ (with parameters $(K', \omega', \mu)$). Hence, the weak topology and total variation topology also coincide on $M_G^{\mathrm{eqb}}$.

\subsection{Typicality of faithfulness in the space of Bayesian networks}\label{sec:equicont_params}
Similar as in Section \ref{sec:bn_params}, we prove the typicality of the faithful Bayesian networks.
First, we obtain the result that if there is at least one faithful Bayesian network in $\BN_G^{\mathrm{eqb}}$, then faithfulness is typical with respect to the metric $d_{TV}^\circ$, which is closely related to the weak topology for this model class.
For each $v \in V$ and $x_{\pa(v)}$ the conditional $\PP(X_v \given x_{\pa(v)})$ lies in $\Pcal^{\mathrm{eqb}}(\Xcal_v)$ (with parameters $(K, \omega, \mu_v)$). By Lemma~\ref{thm:equicont_weak_tv_coincide}, the total variation and weak topologies coincide on each of these kernel spaces, so convergence in $d_{TV}^\circ$ corresponds to weak convergence of each Markov kernel uniformly over its conditioning variable.

\begin{theorem}\label{thm:equicont_params_nowhere_dense}
	Given a DAG $G$ and class $\BN_G^{\mathrm{eqb}}$ induced by a tuple $(K,\omega,\mu)$, if there is at least one faithful model in $\BN_G^{\mathrm{eqb}}$, then the set of faithful Bayesian networks is open and dense in $(\BN_G^{\mathrm{eqb}}, d_{TV}^\circ)$.
\end{theorem}
\begin{proof}[Proof]
	It immediately follows from Theorem \ref{thm:ci_is_closed} and Lemma \ref{thm:dist_map_continuous} that the set of faithful Bayesian networks in $\BN_G^{\mathrm{eqb}}$ is open with respect to $d_{TV}^\circ$.
	One readily verifies that $\BN_G^{\mathrm{eqb}}$ is closed under taking mixtures, so similar as in Definition \ref{def:markov_interpolation}, we can take between any unfaithful model $m_0$ and faithful model $m_1$ the mixture $m_\lambda := (1-\lambda)m_0 + \lambda m_1$ for which we have $m_\lambda \in \BN_G^{\mathrm{eqb}}$. By applying Lemma \ref{thm:interpolation_dependence} (with $\mu_V$ as dominating measure instead of $\QQ = \PP_0 + \PP_1$ to ensure that $p_0, p_1$ and  hence $p_\lambda$ are uniformly equicontinuous and uniformly bounded) to each $d$-connection in $G$ we have that the tail of the sequence $m_{1/n}$ is faithful, and by a similar derivation as in Theorem \ref{thm:bn_params_nowhere_dense} it converges to $m_0$, hence the faithful Bayesian networks are dense.
\end{proof}

For a specific model class, the question remains whether it contains a faithful Bayesian network. For real sample spaces and densities with respect to Lebesgue measure, this is confirmed by leveraging Corollary \ref{thm:spirtes_nowhere_dense}.
\begin{lemma}\label{thm:d_weak_completeness_gauss}
	Given a DAG $G$, let $\Xcal_V = \RR^{|V|}$, and let $\BN_G^{\mathrm{eqb}}$ be induced by a tuple $(K, \omega, \mu)$ with $\mu$ the Lebesgue measure. There is a faithful model $m \in \BN_G^{\mathrm{eqb}}$.
\end{lemma}
\begin{proof}
	By Corollary \ref{thm:spirtes_nowhere_dense}, the faithful parameters are dense in $\Theta_{\Ncal}$. As function of $(x_v, x_{\pa(v)})$, the conditional density $p_\theta(x_v \given x_{\pa(v)})$ has supremum $S(\theta) := 1/\sqrt{2\pi \sigma_v^2}$ and Lipschitz constant $L(\theta) := \sqrt{1+\|\beta_v\|_2^2} / (\sqrt{2\pi e} \sigma_v^2)$, which are both continuous functions onto $(0,\infty)$, and hence the images of the faithful parameters under these two mappings are dense in $(0,\infty)$.

	Let $A := \lim_{t\downarrow 0}\omega(t)/t$, and let $\delta$ be the smallest strictly positive number such that $\omega(\delta) = \delta A/2$, and set $\delta=\infty$ if this condition is not met. Since $\omega$ is non-decreasing, any $A/2$-Lipschitz conditional density bounded by $K\wedge \omega(\delta)$ admits the modulus $\omega$ and is bounded by $K$. Since the images of the mappings $L$ and $S$ are dense in $(0,\infty)$, there is a faithful parameter that satisfies $L(\theta) \leq A/2$ and $S(\theta) \leq K\wedge \omega(\delta)$.
\end{proof}

\begin{corollary}\label{thm:equicont_params_lebesgue_nowhere_dense}
	Under the assumptions of Lemma \ref{thm:d_weak_completeness_gauss}, the set of faithful Bayesian networks is open and dense in $(\BN_G^{\mathrm{eqb}}, d_{TV}^\circ)$.
\end{corollary}

\subsection{Typicality of faithfulness in the space of observational distributions}\label{sec:equicont_dists}
Finally, we also show that the observational distributions induced by the model class $\BN_G^{\mathrm{eqb}}$ are typically faithful. To that end, recall the definition of the set of observational distributions $M_G^{\mathrm{eqb}}$ of $\BN_G^{\mathrm{eqb}}$ induced by a triple $(K, \omega, \mu)$, and define the faithful distributions as $F_G^{\mathrm{eqb}} := M_G^{\mathrm{eqb}}\cap F_G$.

\begin{theorem}\label{thm:equicont_dist_nowhere_dense}
	Given a DAG $G$ and class $\BN_G^{\mathrm{eqb}}$ induced by a tuple $(K,\omega,\mu)$, if there is at least one faithful model in $\BN_G^{\mathrm{eqb}}$, then the set of faithful distributions $F_G^{\mathrm{eqb}}$ is open and dense in $M_G^{\mathrm{eqb}}$, both in the weak topology and in the total variation metric.
\end{theorem}
\begin{proof}
	If there is a faithful model in $\BN_G^{\mathrm{eqb}}$, then the set of faithful models is dense in $(\BN_G^{\mathrm{eqb}}, d_{TV}^\circ)$. By Lemma \ref{thm:dist_map_continuous} the distribution map $D:(\BN_G^{\mathrm{eqb}}, d_{TV}^\circ) \to (M_G^{\mathrm{eqb}}, d_{TV})$ is continuous, so then the set $F_G^{\mathrm{eqb}}$ is dense in $M_G^{\mathrm{eqb}}$. It is closed by Theorem \ref{thm:ci_is_closed}. By Lemma \ref{thm:equicont_weak_tv_coincide} the weak topology and the total variation topology coincide on $M_G^{\mathrm{eqb}}$.
\end{proof}

\section{Bayesian networks with latent variables}\label{sec:bayesnet_latent}
The assumption that all variables in the Bayesian network must be observed is often too restrictive in practice. When certain variables remain unobserved, a suitable modelling class is that of Bayesian networks with observed variables $V$ and latent variables $W$.

Given a DAG $G$ over $V\cup W$, the \emph{latent projection} of $G$ onto $V$ is the \emph{Acyclic Directed Mixed Graph} (ADMG) $G_V$ with vertices $V$, directed edges $a\to b$ if there is a path $a \to w_1 \to ... \to w_n \to b$ in $G$ with $w_i\in W$ for all $i=1, ..., n$ (if any), and bi-directed edges $a\tot b$ if there is a bifurcation $a \ot w_1 \ot ... \ot w_k \to ... \to w_n \to b$ in $G$ with $w_i\in W$ for all $i=1, ..., n$ \citep{verma1993graphical}. An example of a DAG $G$ and its latent projection $G_V$ is given in Figure \ref{fig:latent}.
\begin{figure}[!htb]
	\begin{subfigure}{.45\linewidth}
		\centering
		\begin{tikzpicture}[scale=1,xscale=1.3,yscale=1]
			\node[var] (A) at (0,0) {$A$};
			\node[var] (B) at (1,0) {$B$};
			\node[var] (L1) at (1.5,1) {$L_1$};
			\node[var] (L2) at (2,0) {$L_2$};
			\node[var] (C) at (3,0) {$C$};
			\draw[arr] (A) to (B);
			\draw[arr] (B) to (L2);
			\draw[arr] (L2) to (C);
			\draw[arr] (L1) to (A);
			\draw[arr] (L1) to (B);
			\draw[arr] (L1) to (C);
			\draw[arr] (L1) to (L2);
		\end{tikzpicture}
		\caption{DAG $G$}
		\label{fig:latent_1}
	\end{subfigure}
	\begin{subfigure}{.45\linewidth}
		\centering
		\begin{tikzpicture}[scale=1,xscale=1.3,yscale=1]
			\node[var] (A) at (0,0) {$A$};
			\node[var] (B) at (1.5,0) {$B$};
			\node[var] (C) at (3,0) {$C$};
			\draw[arr] (A) to (B);
			\draw[arr] (B) to (C);
			\draw[biarr,bend left] (A) to (B);
			\draw[biarr,bend left] (B) to (C);
			\draw[biarr,bend right] (A) to (C);
		\end{tikzpicture}
		\caption{Latent projection $G_V$}
		\label{fig:latent_2}
	\end{subfigure}
	\caption{DAG $G$ and latent projection $G_V$ onto $V := \{A, B, C\}$.}
	\label{fig:latent}
\end{figure}

The definition of $d$-separation for ADMGs (also known as \emph{$m$-separation} \citep{richardson2003markov}) employs an extended notion of a collider: given ADMG $G_V$ with path $\pi=a\sus...\sus b$, a \emph{collider} is a vertex $v$ with $\to v \ot$, $\tot v \ot$, $\to v \tot$ or $\tot v \tot$ in $\pi$. As for DAGs, sets of vertices $A$ and $B$ are \emph{$d$-separated} given $C$ in ADMG $G$, written $A\perp^d_G B \given C$, if for every path $\pi = a \sus ... \sus b$ between $a\in A$ and $b \in B$, there is a collider in $\pi$ that is not an ancestor of $C$, or there is a non-collider in $\pi$ in $C$.
The independence models of $G$ and $G_V$ with respect to $V$ are equal: for any $A, B, C\subseteq V$ we have $A\perp_{G}^d B \given C$ if and only if $A\perp_{G_V}^d B \given C$ \citep{verma1993graphical}. As a corollary the Markov property (\ref{eqn:mp}) also holds for the latent projection $G_V$ of Bayesian networks with latent variables.

The question that we consider is whether (parameters of) Bayesian networks with latent variables are typically faithful to their latent projection.
Write $U_{G_V}, F_{G_V}$ for the distributions over $\Xcal_{V\cup W}$ that are unfaithful and faithful with respect to the ADMG $G_V$ respectively.
The core observation for extending results of Sections \ref{sec:bn}, \ref{sec:exp} and \ref{sec:equicont} from DAGs to ADMGs is the following:
\begin{lemma}\label{thm:admg_unfaithful}
	Given DAG $G$ with vertices $V\cup W$ and its latent projection $G_V$ onto $V$, any distribution over $V\cup W$ that is unfaithful with respect to $G_V$ is also unfaithful with respect to $G$.
\end{lemma}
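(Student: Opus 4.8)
The plan is to show that any witness of unfaithfulness with respect to $G^{\mathrm{p}}$ is, verbatim, a witness of unfaithfulness with respect to $G$, exploiting the fact recalled above that $G$ and $G^{\mathrm{p}}$ induce the same $d$-separation relations on subsets of $V$ \citep{verma1993graphical}. The only tool needed is this equality of independence models, so the proof is short; there is no substantive obstacle, only a point of bookkeeping around the word ``unfaithful''.

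Concretely, I would first unpack the hypothesis. If a distribution $\PP$ on $\Xcal_{V\cup W}$ is unfaithful with respect to $G^{\mathrm{p}}$, then it is Markov with respect to $G^{\mathrm{p}}$ but fails faithfulness, so there exist $A, B, C \subseteq V$ with $A \nperp^d_{G^{\mathrm{p}}} B \given C$ while $X_A \Indep_\PP X_B \given X_C$. Since $A, B, C \subseteq V \subseteq V\cup W$, the equality of the $d$-separation relations of $G$ and $G^{\mathrm{p}}$ on subsets of $V$ turns the $d$-connection $A \nperp^d_{G^{\mathrm{p}}} B \given C$ into $A \nperp^d_G B \given C$ (this is the contrapositive of the implication $A\perp^d_G B \given C \Rightarrow A\perp^d_{G^{\mathrm{p}}} B\given C$). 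Hence the very same triple $(A, B, C)$ exhibits a $d$-connection in $G$ that is not matched by a conditional dependence in $\PP$, i.e.\ $\PP$ is not faithful with respect to $G$.

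The one thing to watch is the convention attached to ``unfaithful'': to land $\PP$ in $U_G = M_G\setminus F_G$ one also needs $\PP\in M_G$, and Markovness with respect to $G^{\mathrm{p}}$ alone does not force this (a distribution can satisfy every $V$-independence imposed by $G^{\mathrm{p}}$ yet violate an independence involving a latent variable imposed by $G$). In the intended applications, however, $\PP$ is the observational distribution of a Bayesian network over $G$ and hence Markov with respect to $G$ by the global Markov property, so membership in $M_G$ is automatic. Accordingly I would either add the hypothesis $\PP\in M_G$ to the statement or simply carry forward the conclusion ``$\PP$ is not faithful to $G$'', which is exactly what is needed to transfer the nowhere‑denseness statements of Sections~\ref{sec:distributions} and \ref{sec:bayesnets} from $G$ to its latent projection $G^{\mathrm{p}}$.
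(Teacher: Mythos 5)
Your proof is correct and follows essentially the same route as the paper: both arguments transfer the witnessing triple $(A,B,C)\subseteq V$ via the equality of the independence models of $G$ and $G^{\mathrm{p}}$ on subsets of $V$, which the paper phrases as the inclusion of the union defining $U_{G^{\mathrm{p}}}$ (over $A,B,C\subseteq V$) into the union defining $U_G$ (over $A,B,C\subseteq V\cup W$). Your remark about needing $\PP\in M_G$ is a fair observation; the paper sidesteps it by writing $M_G$ directly into the defining expression for both sets, and, as you note, in all applications the distributions arise from Bayesian networks over $G$ and so lie in $M_G$ automatically.
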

\begin{proof}
	The latent projection preserves $d$-separations, so the result follows immediately from the expression for the set of unfaithful distributions:
	\begin{equation*}
		\bigcup_{A\nperp^d_{G} B\given C} \{\PP \in M_G : X_A\Indep_\PP X_B \given X_C \}.
	\end{equation*}
	For $U_{G_V}$ the union ranges over subsets $A, B, C \subseteq V$ and for $U_{G}$ the union ranges over $A, B, C \subseteq V\cup W$ (with a $d$-connection), hence we get $U_{G_V}\subseteq U_G$.
\end{proof}

Now, the preceding results are straightforwardly extended to conclude for unconstrained Bayesian networks with latent variables, conditional exponential family Bayesian networks with latent variables, and equicontinuous and bounded Bayesian networks with latent variables that faithfulness (which is only required to hold with respect to the latent projection) is open and dense.
The extensions of the results for typicality in the space of observational distributions (Theorems \ref{thm:bn_dist_nowhere_dense}, \ref{thm:exp_dist_nowhere_dense} and \ref{thm:equicont_dist_nowhere_dense}) immediately follow from Lemma \ref{thm:admg_unfaithful}. Considering the distribution map $D: \BN_G \to M_G$, the inclusion $D^{-1}(U_{G_V}) \subseteq D^{-1}(U_{G})$ (which follows from Lemma~\ref{thm:admg_unfaithful}) implies that $D^{-1}(F_{G_V}) \supseteq D^{-1}(F_{G})$ is dense. Openness follows from the same argument as in the original theorems: $U_{G_V}$ is a finite union of conditional independence sets which are closed in the total variation topology by Theorem~\ref{thm:ci_is_closed}, and $D$ is continuous. This gives the extensions of the results for the Bayesian networks themselves (Theorems \ref{thm:bn_params_nowhere_dense} and \ref{thm:equicont_params_nowhere_dense} and Corollary \ref{thm:equicont_params_lebesgue_nowhere_dense}). The extensions of the results for Euclidean parameters (Theorem~\ref{thm:exp_params_nowhere_dense} and Corollaries \ref{thm:spirtes_nowhere_dense} and \ref{thm:meek_nowhere_dense}) follow by the same reasoning applied to $T = D \circ \varphi$.

\section{Discussion}\label{sec:discussion}
In this work, we have established several results concerning the typicality of the faithfulness property, showing in various settings that Bayesian networks are indeed typically faithful in both a topological and a measure-theoretic sense. We have shown:

\begin{itemize}
	\item For unconstrained nonparametric Bayesian networks:
	\begin{itemize}
		\item the faithful distributions are open and dense with respect to the total variation metric (Thm.\ \ref{thm:bn_dist_nowhere_dense});
		\item the faithful Bayesian networks are open and dense with respect to the metric $d_{TV}^\circ$ (Thm.\ \ref{thm:bn_params_nowhere_dense}).
	\end{itemize}
	\item For sufficiently regular conditional exponential family Bayesian networks:
	\begin{itemize}
		\item if there exists a faithful parameter, then the faithful parameters are open and dense with respect to the Euclidean topology and the unfaithful parameters have Lebesgue measure zero (Thm.\ \ref{thm:exp_params_nowhere_dense});
		\item if there exists a faithful parameter, then the faithful distributions are open and dense with respect to the weak topology and the total variation metric (Thm.\ \ref{thm:exp_dist_nowhere_dense});
		\item for linear Gaussian and discrete Bayesian networks, there is a faithful parameter, so faithfulness is typical in these ways (Cor.\ \ref{thm:spirtes_nowhere_dense} and \ref{thm:meek_nowhere_dense}).
	\end{itemize}
	\item For equicontinuous and bounded Bayesian networks:
	\begin{itemize}
		\item if there exists a faithful model, then the faithful models are open and dense with respect to the metric $d_{TV}^\circ$ (Thm.\ \ref{thm:equicont_params_nowhere_dense});
		\item if there exists a faithful model, then the faithful distributions are open and dense in the weak topology and the total variation metric (Thm.\ \ref{thm:equicont_dist_nowhere_dense});
		\item for real sample spaces and densities with respect to Lebesgue measure, there exists a faithful parameter, so faithful Bayesian networks c.q.\ distributions are open and dense (Cor.\ \ref{thm:equicont_params_lebesgue_nowhere_dense}).
	\end{itemize}
\end{itemize}

This collection of results naturally raises the question: what is the relative value of these results, with their different notions of typicality?
The two main notions that we use --- topological and measure-theoretic --- do not necessarily coincide.
For example, the Smith-Volterra-Cantor set is a nowhere dense subset of $[0,1]$ that has Lebesgue measure 1/2.
In general, \emph{every} subset of $\RR$ is the disjoint union of a meager set and a Lebesgue null set (\citealp{oxtoby1980measure}, Theorem 1.6): a set that is small in one sense may be large in the other sense.

The measure-theoretic approach, which identifies `atypical' sets with those of (Lebesgue) measure zero, is for example useful when one samples a Bayesian network, e.g.\ for performing simulations. If the unfaithful parameters have measure zero, the probability of encountering one when sampling from a distribution with a density is zero. Note that this $\sigma$-ideal depends on the choice of $\sigma$-algebra and measure. A restriction is that this approach is challenging in the nonparametric case, as no canonical analogue of Lebesgue measure exists in infinite-dimensional spaces. For infinite-dimensional locally compact topological groups the Haar measure is a natural choice, but the space of (observational distributions of) Bayesian networks is generally not locally compact with respect to the weak or total variation topology. \cite{hunt1992prevalence} introduce the notion of \emph{shy sets} as analogue of Lebesgue-null sets in arbitrary linear metric spaces. The space of probability distributions is not linear so this concept is not applicable in our case.

The topological approach identifies `typical' sets with those that are either open and dense, or complements of nowhere dense sets, or complements of meager sets. Clearly, such notions of typicality depend on the choice of the topology. The total variation topology is convenient, since here, conditional independence is a closed property. The weak topology is a natural choice as it is closely related to testability \citep{dembo1994topological, genin2017topology,boeken2026topological}.
However, obtaining results in the weak topology requires finding regularity conditions such that conditional independence is closed, which does not hold in general. Finding such regularity conditions is challenging. One approach is to find regularity conditions such that the weak topology and total variation topology coincide. To this end, the employed conditions by \cite{barndorff-nielsen2014information} and \cite{boos1985converse} for exponential families and equicontinuous and bounded densities are convenient.
\cite{boeken2026topological} deviate from this approach of equating the total variation topology and weak topology, as they provide alternative uniform continuity conditions on the Markov kernels $x_B\mapsto \PP(X_A\given x_B)$, for which they directly show that conditional independence is closed in the weak topology. However, these regularity conditions fail to combine nicely with the interpolation of Bayesian networks that we consider in our proof technique for showing that faithful Bayesian networks are dense, hence we do not follow that approach in the current work.
We have introduced the metric $d_{TV}^\circ$ on the space of Bayesian networks, which measures the worst-case distance between corresponding Markov kernels. This is a natural choice when viewing Bayesian networks as causal models, where the Markov kernels represent mechanisms that are defined for all parent values.
Alternative metrics could be considered, for instance by replacing the supremum over parent values with an average weighted by some reference measure, which would yield a weaker topology and potentially stronger typicality results. However, such a metric would depend on the choice of reference measure, and would not distinguish between Bayesian networks that differ only on parent values with zero reference measure --- an undesirable property for causal models.

\subsection{Implications for constraint-based causal discovery}
The topological properties of conditional independence have two important implications for constraint-based causal discovery:
\begin{itemize}
	\item when conditional independence is closed in the weak topology there exists a consistent test, which can be used to make any sound constraint-based causal discovery algorithm consistent, and
	\item the set of faithful Bayesian networks is open and dense, so any causal discovery algorithm that is consistent under the faithfulness assumption is has a topologically large `domain of consistency'.
\end{itemize}

For the first statement, consider any given set of probability measures $\Pcal$ on $\Xcal_A\times \Xcal_B\times \Xcal_C$, and let $H_0 = \{\PP \in \Pcal : X_A\Indep_\PP X_B\given X_C\}$ and $H_1 = \{\PP \in \Pcal : X_A\nIndep_\PP X_B\given X_C\}$.
Under various regularity conditions on $\Pcal$, \cite{dembo1994topological,ermakov2017consistent,genin2017topology} and \cite{boeken2026topological} give topological characterisations for the (uniformly) consistent testability of any pair of statistical hypotheses. Importantly, \cite{genin2017topology} (Theorem 4.1) show for distributions that have a density with respect to a common dominating measure, that if a null-hypothesis $H_0$ is closed in the weak topology, then there exists a consistent test, i.e.\ for every $n\in\NN$ there exists a map $\varphi_n : \Xcal_V^n \to \{0,1\}$ such that $\lim_{n\to\infty}\PP^n(\varphi_n=i) = 1$ if and only if $\PP \in H_i$.\footnote{Actually, \cite{genin2017topology} show more: that this test is consistent and has Type-1 error control. We ignore this property because it is unclear how Type-1 error control translates to error control for causal discovery algorithms. This suggests that we are restricting the model classes more than necessary. Indeed, the existence of a merely consistent test (without requiring Type-1 error control) is equivalent to the null and alternative hypotheses being $F_\sigma$ in the weak topology (\citealp{genin2017topology}, Theorem 4.3). This can be shown to hold for conditional (in)dependence in countable unions of the classes considered in Theorem \ref{thm:ci_is_closed_weak}, and for which analogues of Theorem \ref{thm:consistent_discovery} and Corollary \ref{thm:consistent_discovery_maximal} hold with \emph{comeager} domains of consistency.}
For classes of distributions where the weak topology and the total variation topology coincide, it follows from Theorem \ref{thm:ci_is_closed} that conditional independence is consistently testable.

Since causal discovery algorithms are agnostic of the underlying causal graph, we need to consider the space of Bayesian networks and induced observational distributions of Bayesian networks with all possible causal graphs over a fixed set of variables. To formalise this, let $V$ be a finite index set, let $\Xcal_V = \prod_{v\in V}\Xcal_v$ be a product of separable complete metric spaces, let $\mu$ be a dominating measure $\mu$ on $\Xcal_V$, and finally let $\BN^{\mathrm{eqb}} := \bigcup_{G}\{G\} \times \BN_G^{\mathrm{eqb}}$ be the set of all Bayesian networks, where the union is taken over all DAGs $G$ with vertices $V$. Similar to \cite{lin2020learning}, equip this space with the metric $\delta + d_{TV}^\circ$, where $\delta$ is the discrete metric on the space of DAGs with vertices $V$. This way we can extend the result about consistent testability without relying on a specific graph $G$.
\begin{theorem}\label{thm:ci_is_closed_weak}
	Conditional independence is consistently testable in the set of observational distributions induced by $\BN^{\mathrm{eqb}}$.
\end{theorem}
Since the weak topology and total variation topology coincide for the conditional exponential family Bayesian networks of Theorem \ref{thm:exp_dist_nowhere_dense}, a similar result can be derived for this model class.

For the second statement, it follows from Theorem \ref{thm:equicont_params_nowhere_dense} that the set of faithful Bayesian networks is open and dense in $\BN^{\mathrm{eqb}}$.
Hence, any constraint-based causal discovery algorithm that is sound under the assumption of faithfulness (meaning that it arrives at the correct conclusion when using a conditional independence oracle) --- like the \emph{PC-} and \emph{FCI-algorithms} \citep{spirtes1993causation} ---  is consistent on a topologically large set among all possible Bayesian networks.

\begin{theorem}\label{thm:consistent_discovery}
	Given a set of vertices $V$, if for each DAG $G$ with vertices $V$ there exists a faithful Bayesian network in $\BN_G^{\mathrm{eqb}}$, then every causal discovery algorithm that is sound under the assumption of faithfulness is consistent on an open and dense domain in $\BN^{\mathrm{eqb}}$.
\end{theorem}

Obtaining these results for arbitrary Bayesian networks without any regularity assumptions on the distributions is impossible. Although conditional independence is closed with respect to the total variation metric, this does not imply that it is consistently testable. Indeed, \cite{shah2020hardness,neykov2021minimax} and \cite{boeken2026topological} prove that without imposing regularity conditions, conditional independence is not consistently testable.

Faithfulness is sufficient, but not \emph{necessary} for consistent constraint-based causal discovery. Weaker conditions have been proposed, like adjacency faithfulness \citep{spirtes1993causation}, P-minimality \citep{pearl2009causality} and SGS-minimality \citep{spirtes1993causation}. \cite{zhang2013comparison} has shown that SGS-minimality is strictly weaker than P-minimality. P-minimality is sufficient for constraint-based causal discovery \citep{lin2020learning}, and SGS-minimality is necessary. Since these sets of Bayesian networks include all faithful Bayesian networks, they are the complement of a nowhere dense set if the faithful Bayesian networks are open and dense. In other words, the typicality of faithful Bayesian networks implies the typicality of even larger classes of Bayesian networks, like the P-minimal Bayesian networks.

\cite{lin2020learning} show that there exist constraint-based causal discovery algorithms that are consistent on a \emph{maximal} domain. In particular, they show that there exists a causal discovery algorithm $\hat{H}$ that is consistent for all P-minimal Bayesian networks, and for any other algorithm that is consistent for some Bayesian network $(G,\PP)$, $\hat{H}$ is also consistent for $(G, \PP)$, so the domain of consistency is maximal. Combined with Theorem \ref{thm:ci_is_closed_weak} this gives the following result.

\begin{corollary}\label{thm:consistent_discovery_maximal}
	Given a set of vertices $V$, if for each DAG $G$ with vertices $V$ there exists a faithful Bayesian network in $\BN_G^{\mathrm{eqb}}$, then there exists a causal discovery algorithm that is consistent on a nowhere dense, maximal domain in $\BN^{\mathrm{eqb}}$.
\end{corollary}

Faithful distributions can possess extremely weak dependencies that are hard to test for. For linear Gaussian networks, \cite{zhang2002strong} consider \emph{strong faithfulness}, i.e.\ the condition that every $d$-connection in the graph has in the distribution a corresponding conditional dependence (partial correlation in their case) with some minimal strength. The set of parameters which exhibit a weak dependency is known to be of strict positive measure \citep{uhler2013geometry}, so strong faithfulness is not typical in the measure-theoretic sense. That every conditional dependence has a minimal strength implies the existence of a uniformly consistent test, and hence of uniformly consistent causal discovery algorithms. Namely, under tightness conditions, \cite{boeken2026topological} show that uniformly consistent testability is implied by separation of the hypotheses in the \emph{bounded Lipschitz metric $d_{BL}$} for the weak topology (i.e.\ the condition that $d_{BL}(H_0, H_1) > 0$) and that the nonparametric version of the minimal strength of the conditional dependence (i.e.\ that for some $\varepsilon>0$ the alternative hypothesis $H_1$ consists of measures with $d_{BL}(\PP(X\given Z)\otimes \PP(Y, Z), \PP(X, Y, Z)) > \varepsilon$) satisfies this separation property under certain regularity conditions. We conjecture that under these regularity assumptions and this generalised notion of strong faithfulness, uniformly consistent constraint-based causal discovery can be achieved.

\subsection{Concluding remarks}

A number of follow-up questions remain. First, although there is no canonical measure on the space of Bayesian networks, one might construct specific ones. For example, conditional optional Pólya trees \citep{ma2017recursive} provide a flexible class of random conditional measures. These can straightforwardly be extended to random Bayesian networks with a given graph. It would be of interest to know whether faithfulness has full measure.

\cite{sadeghi2017faithfulness} characterises faithfulness in terms of the properties intersectionality, compositionality, singleton-transitivity, and ordered downward- and upward-stability. Our various typicality results for faithfulness directly imply corresponding typicality results for those constituent properties. Sadeghi equates very similar properties to faithfulness of distributions to more general classes of graphs (e.g.\ chain graphs and ancestral graphs). It would be interesting to see whether this approach leads to typicality results for faithfulness of distributions with respect to these more general classes of graphs.

Finally, our work focusses on acyclic causal models. However, constraint-based causal discovery algorithms also exist for certain classes of uniquely solvable cyclic models, like \emph{simple SCMs} \citep{bongers2021foundations}. These causal discovery algorithms either rely on versions of faithfulness defined in terms of $d$-separations or $\sigma$-separation, see e.g.\ \cite{richardson1996discovery}, \cite{strobl2019constraintbased} and \cite{mooij2020constraintbased}.
Our proof techniques for acyclic models do not immediately transfer to cyclic models.
For example, it is not clear whether the interpolation of two simple SCMs is again a simple SCM. It therefore remains an open question whether simple SCMs are typically faithful.

\section{Acknowledgements}
This research was supported by Booking.com. We thank Konstantin Genin for helpful remarks.

\bibliographystyle{apalike}
\bibliography{refs}

@book{aliprantis2006infinite,
  title      = {Infinite Dimensional Analysis: A Hitchhiker's Guide},
  author     = {Aliprantis, Charalambos D. and Border, Kim C.},
  year       = 2006,
  edition    = {3rd [rev. and enl.] ed},
  publisher  = {Springer},
  address    = {Berlin ; New York},
  isbn       = {978-3-540-29586-0},
  lccn       = {QA320 .A45 2006},
  keywords   = {Economics Mathematical,Functional analysis},
  annotation = {OCLC: ocm69983226}
}

@incollection{bareinboim2022pearls,
  title     = {On {{Pearl}}'s {{Hierarchy}} and the {{Foundations}} of {{Causal Inference}}},
  booktitle = {Probabilistic and {{Causal Inference}}: {{The Works}} of {{Judea Pearl}}},
  author    = {Bareinboim, Elias and Correa, Juan D. and Ibeling, Duligur and Icard, Thomas},
  year      = {2022},
  month     = {mar},
  volume    = {36},
  pages     = {507--556},
  publisher = {Association for Computing Machinery},
  address   = {New York, NY, USA},
  url       = {https://doi.org/10.1145/3501714.3501743},
  isbn      = {978-1-4503-9586-1}
}

@book{barndorff-nielsen2014information,
  title     = {Information and Exponential Families: In Statistical Theory},
  author    = {{Barndorff-Nielsen}, Ole E.},
  year      = 2014,
  publisher = {John Wiley \& Sons},
  address   = {Chichester [U.K.] New York},
  doi       = {10.1002/9781118857281},
  isbn      = {978-1-118-85750-2 978-1-118-85737-3 978-1-306-77257-0 978-1-118-85728-1},
  langid    = {english}
}

@misc{boeken2026topological,
  title = {Topological {{Criteria}} for {{Hypothesis Testing}} with {{Finite-Precision Measurements}}},
  author = {Boeken, Philip and Skapinakis, Eduardo and Genin, Konstantin and Mooij, Joris M.},
  year = 2026,
  month = jan,
  number = {arXiv:2601.13946},
  eprint = {2601.13946},
  primaryclass = {math},
  publisher = {arXiv},
  doi = {10.48550/arXiv.2601.13946},
  url = {http://arxiv.org/abs/2601.13946},
  archiveprefix = {arXiv},
  keywords = {Mathematics - Statistics Theory}
}

@book{bogachev2007measurea,
  title     = {Measure Theory Vol. {{II}}},
  author    = {Bogachev, V. I.},
  year      = 2007,
  publisher = {Springer},
  address   = {Berlin ; New York},
  isbn      = {978-3-540-34513-8},
  langid    = {english},
  lccn      = {QA312 .B645 2007},
  keywords  = {Integration Functional,Measure theory}
}

@article{bongers2021foundations,
  title     = {Foundations of structural causal models with cycles and latent variables},
  author    = {Bongers, Stephan and Forr{\'e}, Patrick and Peters, Jonas and Mooij, Joris M},
  journal   = {The Annals of Statistics},
  volume    = {49},
  number    = {5},
  pages     = {2885--2915},
  year      = {2021},
  publisher = {Institute of Mathematical Statistics}
}

@article{boos1985converse,
  title   = {A {{Converse}} to {{Scheffe}}'s {{Theorem}}},
  author  = {Boos, Dennis D.},
  year    = 1985,
  month   = mar,
  journal = {The Annals of Statistics},
  volume  = {13},
  number  = {1},
  issn    = {0090-5364},
  doi     = {10.1214/aos/1176346604},
  url     = {https://projecteuclid.org/journals/annals-of-statistics/volume-13/issue-1/A-Converse-to-Scheffes-Theorem/10.1214/aos/1176346604.full}
}

@article{dembo1994topological,
  title   = {A {{Topological Criterion}} for {{Hypothesis Testing}}},
  author  = {Dembo, Amir and Peres, Yuval},
  year    = 1994,
  month   = mar,
  journal = {The Annals of Statistics},
  volume  = {22},
  number  = {1},
  issn    = {0090-5364},
  doi     = {10.1214/aos/1176325360},
  url     = {https://projecteuclid.org/journals/annals-of-statistics/volume-22/issue-1/A-Topological-Criterion-for-Hypothesis-Testing/10.1214/aos/1176325360.full}
}

@article{ermakov2017consistent,
  title   = {On {{Consistent Hypothesis Testing}}},
  author  = {Ermakov, M.},
  year    = 2017,
  month   = sep,
  journal = {Journal of Mathematical Sciences},
  volume  = {225},
  number  = {5},
  pages   = {751--769},
  issn    = {1573-8795},
  doi     = {10.1007/s10958-017-3491-4},
  url     = {https://doi.org/10.1007/s10958-017-3491-4},
  langid  = {english}
}

@article{feigin1981conditional,
  title     = {Conditional {{Exponential Families}} and a {{Representation Theorem}} for {{Asympotic Inference}}},
  author    = {Feigin, Paul D.},
  year      = {1981},
  month     = may,
  journal   = {The Annals of Statistics},
  volume    = {9},
  number    = {3},
  pages     = {597--603},
  publisher = {Institute of Mathematical Statistics},
  issn      = {0090-5364, 2168-8966},
  doi       = {10.1214/aos/1176345463},
  url       = {https://projecteuclid.org/journals/annals-of-statistics/volume-9/issue-3/Conditional-Exponential-Families-and-a-Representation-Theorem-for-Asympotic-Inference/10.1214/aos/1176345463.full},
  keywords  = {60J30,62M05,Additive processes,Conditionally additive exponential family,nonergodic stochastic processes}
}

@article{geiger1990identifying,
  title   = {Identifying Independence in {B}ayesian Networks},
  author  = {Geiger, Dan and Verma, Thomas and Pearl, Judea},
  year    = {1990},
  month   = aug,
  journal = {Networks},
  volume  = {20},
  number  = {5},
  pages   = {507--534},
  issn    = {00283045, 10970037},
  doi     = {10.1002/net.3230200504},
  url     = {https://onlinelibrary.wiley.com/doi/10.1002/net.3230200504},
  langid  = {english}
}

@article{genin2017topology,
  title         = {The {{Topology}} of {{Statistical Verifiability}}},
  author        = {Genin, Konstantin and Kelly, Kevin T.},
  year          = 2017,
  month         = jul,
  journal       = {Electronic Proceedings in Theoretical Computer Science},
  volume        = {251},
  eprint        = {1707.09378},
  primaryclass  = {cs, math},
  pages         = {236--250},
  issn          = {2075-2180},
  doi           = {10.4204/EPTCS.251.17},
  url           = {http://arxiv.org/abs/1707.09378},
  archiveprefix = {arXiv}
}

@book{gunning1965analytic,
  title       = {Analytic {{Functions}} of {{Several Complex Variables}}},
  author      = {Gunning, Robert C. and Rossi, Hugo},
  year        = 1965,
  publisher   = {Prentice-Hall},
  googlebooks = {TR6bEAAAQBAJ},
  isbn        = {978-1-4704-7066-1},
  langid      = {english},
  keywords    = {Mathematics / General}
}

@article{hunt1992prevalence,
  title   = {Prevalence: A Translation-Invariant ``Almost Every'' on Infinite-Dimensional Spaces},
  author  = {Hunt, Brian R. and Sauer, Tim and Yorke, James A.},
  year    = 1992,
  journal = {Bulletin of the American Mathematical Society},
  volume  = {27},
  number  = {2},
  pages   = {217--238},
  issn    = {0273-0979, 1088-9485},
  doi     = {10.1090/S0273-0979-1992-00328-2},
  url     = {https://www.ams.org/bull/1992-27-02/S0273-0979-1992-00328-2/},
  langid  = {english}
}

@inproceedings{ibeling2021topological,
  title     = {A {{Topological Perspective}} on {{Causal Inference}}},
  booktitle = {Advances in {{Neural Information Processing Systems}}},
  author    = {Ibeling, Duligur and Icard, Thomas},
  year      = {2021},
  volume    = {34},
  pages     = {5608--5619},
  publisher = {Curran Associates, Inc.},
  url       = {https://proceedings.neurips.cc/paper/2021/hash/2c463dfdde588f3bfc60d53118c10d6b-Abstract.html}
}

@book{kechris2012classical,
  title     = {Classical descriptive set theory},
  author    = {Kechris, Alexander},
  year      = {1995},
  publisher = {Springer}
}

@article{lang1986note,
  title   = {A Note on the Measurability of Convex Sets},
  author  = {Lang, Robert},
  year    = 1986,
  month   = jul,
  journal = {Archiv der Mathematik},
  volume  = {47},
  number  = {1},
  pages   = {90--92},
  issn    = {1420-8938},
  doi     = {10.1007/BF01202504},
  url     = {https://doi.org/10.1007/BF01202504},
  langid  = {english}
}

@book{lauritzen1996graphical,
  title     = {Graphical models},
  author    = {Lauritzen, Steffen},
  year      = {1996},
  publisher = {Clarendon Press}
}

@article{lauritzen2024total,
  title    = {Total Variation Convergence Preserves Conditional Independence},
  author   = {Lauritzen, Steffen},
  year     = {2024},
  month    = {nov},
  journal  = {Statistics \& Probability Letters},
  volume   = {214},
  pages    = {110200},
  issn     = {0167-7152},
  doi      = {10.1016/j.spl.2024.110200},
  url      = {https://www.sciencedirect.com/science/article/pii/S016771522400169X},
  keywords = {Markov properties,Scheffe's theorem}
}

@inproceedings{lin2020learning,
  title     = {On {{Learning Causal Structures}} from {{Non-Experimental Data}} without {{Any Faithfulness Assumption}}},
  booktitle = {Proceedings of the 31st {{International Conference}}  on {{Algorithmic Learning Theory}}},
  author    = {Lin, Hanti and Zhang, Jiji},
  year      = {2020},
  month     = jan,
  pages     = {554--582},
  publisher = {PMLR},
  issn      = {2640-3498},
  url       = {https://proceedings.mlr.press/v117/lin20a.html},
  langid    = {english}
}

@article{ma2017recursive,
  title   = {Recursive Partitioning and Multi-Scale Modeling on Conditional Densities},
  author  = {Ma, Li},
  year    = 2017,
  month   = jan,
  journal = {Electronic Journal of Statistics},
  volume  = {11},
  number  = {1},
  issn    = {1935-7524},
  doi     = {10.1214/17-EJS1254},
  url     = {https://projecteuclid.org/journals/electronic-journal-of-statistics/volume-11/issue-1/Recursive-partitioning-and-multi-scale-modeling-on-conditional-densities/10.1214/17-EJS1254.full},
  langid  = {english}
}

@inproceedings{meek1995strong,
  author    = {Meek, Christopher},
  title     = {Strong Completeness and Faithfulness in {B}ayesian Networks},
  year      = {1995},
  isbn      = {1558603859},
  publisher = {Morgan Kaufmann Publishers Inc.},
  address   = {San Francisco, CA, USA},
  booktitle = {Proceedings of the Eleventh Conference on Uncertainty in Artificial Intelligence},
  pages     = {411–418},
  numpages  = {8},
  location  = {Montr\'{e}al, Qu\'{e}, Canada},
  series    = {UAI’95}
}

@phdthesis{meek1998graphical,
  title  = {Graphical {{Models}}: {{Selecting}} Causal and Statistical Models},
  author = {Meek, Christopher},
  year   = {1998},
  langid = {english},
  school = {Carnegie Mellon University}
}

@article{mityagin2020zero,
  title    = {The {{Zero Set}} of a {{Real Analytic Function}}},
  author   = {Mityagin, B. S.},
  year     = 2020,
  month    = mar,
  journal  = {Mathematical Notes},
  volume   = {107},
  number   = {3},
  pages    = {529--530},
  issn     = {1573-8876},
  doi      = {10.1134/S0001434620030189},
  url      = {https://doi.org/10.1134/S0001434620030189},
  langid   = {english},
  keywords = {implicit function theorem,measure-zero sets,real-analytic functions}
}

@inproceedings{mooij2020constraintbased,
  title     = {Constraint-{{Based Causal Discovery}} Using {{Partial Ancestral Graphs}} in the Presence of {{Cycles}}},
  booktitle = {UAI2020},
  author    = {Mooij, J.M. and Claassen, Tom},
  year      = {2020},
  pages     = {1159--1168},
  publisher = {PMLR},
  issn      = {2640-3498},
  url       = {https://proceedings.mlr.press/v124/m-mooij20a.html},
  langid    = {english}
}

@book{munkres2014topology,
  title     = {Topology},
  author    = {Munkres, James Raymond},
  year      = 2014,
  edition   = {2. ed., Pearson new internat. ed},
  publisher = {Pearson},
  address   = {Harlow},
  isbn      = {978-1-292-02362-5},
  langid    = {english}
}

@article{neykov2021minimax,
  title     = {Minimax Optimal Conditional Independence Testing},
  author    = {Neykov, Matey and Balakrishnan, Sivaraman and Wasserman, Larry},
  year      = {2021},
  month     = aug,
  journal   = {The Annals of Statistics},
  volume    = {49},
  number    = {4},
  pages     = {2151--2177},
  publisher = {{Institute of Mathematical Statistics}},
  issn      = {0090-5364, 2168-8966},
  doi       = {10.1214/20-AOS2030},
  url       = {https://projecteuclid.org/journals/annals-of-statistics/volume-49/issue-4/Minimax-optimal-conditional-independence-testing/10.1214/20-AOS2030.full},
  keywords  = {62G10,Conditional independence,Hypothesis testing,Minimax optimality}
}

@book{oxtoby1980measure,
  title     = {Measure and {{Category}}},
  author    = {Oxtoby, John C.},
  year      = {1980},
  series    = {Graduate {{Texts}} in {{Mathematics}}},
  edition   = {{S}econd},
  publisher = {{Springer New York}},
  address   = {{New York, NY}},
  doi       = {10.1007/978-1-4684-9339-9},
  url       = {http://link.springer.com/10.1007/978-1-4684-9339-9},
  isbn      = {978-1-4684-9341-2 978-1-4684-9339-9},
  langid    = {english}
}

@book{pearl1988probabilistic,
  title     = {Probabilistic Reasoning in Intelligent Systems: Networks of Plausible Inference},
  author    = {Pearl, Judea},
  year      = 1988,
  series    = {The {{Morgan Kaufmann}} Series in Representation and Reasoning},
  edition   = {Rev. 2. ed., transferred to digital printing},
  publisher = {Morgan Kaufmann},
  address   = {San Francisco, Calif},
  isbn      = {978-1-55860-479-7},
  langid    = {english}
}

@book{pearl2009causality,
  title       = {Causality},
  author      = {Pearl, Judea},
  year        = 2009,
  month       = sep,
  publisher   = {Cambridge University Press},
  googlebooks = {f4nuexsNVZIC},
  isbn        = {978-0-521-89560-6},
  langid      = {english},
  keywords    = {Causation,Computers / Artificial Intelligence / General,Mathematics / History & Philosophy,Philosophy / Movements / Analytic,Probabilities,Science / Philosophy & Social Aspects,Social Science / Research}
}

@inproceedings{richardson1996discovery,
  title     = {A Discovery Algorithm for Directed Cyclic Graphs},
  booktitle = {Proceedings of the {{Twelfth}} International Conference on {{Uncertainty}} in Artificial Intelligence},
  author    = {Richardson, T.},
  year      = {1996},
  series    = {{{UAI}}'96},
  pages     = {454--461},
  publisher = {Morgan Kaufmann Publishers Inc.},
  address   = {San Francisco, CA, USA},
  isbn      = {978-1-55860-412-4}
}

@article{richardson2003markov,
  title      = {Markov {{Properties}} for {{Acyclic Directed Mixed Graphs}}},
  author     = {Richardson, Thomas},
  year       = {2003},
  journal    = {Scandinavian Journal of Statistics},
  volume     = {30},
  number     = {1},
  eprint     = {4616754},
  eprinttype = {jstor},
  pages      = {145--157},
  publisher  = {[Board of the Foundation of the Scandinavian Journal of Statistics, Wiley]},
  issn       = {0303-6898},
  url        = {https://www.jstor.org/stable/4616754}
}

@article{sadeghi2017faithfulness,
  title   = {Faithfulness of {{Probability Distributions}} and {{Graphs}}},
  author  = {Sadeghi, Kayvan},
  year    = 2017,
  journal = {Journal of Machine Learning Research},
  volume  = {18},
  number  = {148},
  pages   = {1--29},
  issn    = {1533-7928},
  url     = {http://jmlr.org/papers/v18/17-275.html}
}

@article{scheffe1947useful,
  title   = {A {{Useful Convergence Theorem}} for {{Probability Distributions}}},
  author  = {Scheff\'e, Henry},
  year    = {1947},
  month   = sep,
  journal = {The Annals of Mathematical Statistics},
  volume  = {18},
  number  = {3},
  pages   = {434--438},
  issn    = {0003-4851},
  doi     = {10.1214/aoms/1177730390},
  url     = {http://projecteuclid.org/euclid.aoms/1177730390},
  langid  = {english}
}

@article{shah2020hardness,
  title     = {The Hardness of Conditional Independence Testing and the Generalised Covariance Measure},
  author    = {Shah, Rajen D. and Peters, Jonas},
  year      = 2020,
  month     = jun,
  journal   = {The Annals of Statistics},
  volume    = {48},
  number    = {3},
  pages     = {1514--1538},
  publisher = {Institute of Mathematical Statistics},
  issn      = {0090-5364, 2168-8966},
  doi       = {10.1214/19-AOS1857},
  url       = {https://projecteuclid.org/journals/annals-of-statistics/volume-48/issue-3/The-hardness-of-conditional-independence-testing-and-the-generalised-covariance/10.1214/19-AOS1857.full},
  keywords  = {62G08,62G10,Conditional independence,Hypothesis testing,kernel ridge regression,testability,wild bootstrap}
}

@book{spirtes1993causation,
  title     = {Causation, {{Prediction}}, and {{Search}}},
  author    = {Spirtes, Peter and Glymour, Clark and Scheines, Richard},
  year      = {1993},
  series    = {Lecture {{Notes}} in {{Statistics}}},
  volume    = {81},
  publisher = {Springer},
  address   = {New York, NY},
  doi       = {10.1007/978-1-4612-2748-9},
  url       = {http://link.springer.com/10.1007/978-1-4612-2748-9},
  copyright = {http://www.springer.com/tdm},
  isbn      = {978-1-4612-7650-0 978-1-4612-2748-9},
  keywords  = {algorithms,boundary element method,causality,computation,design,distribution,eXist,knowledge,Microsoft Access,probability,probability distribution,Statistica,statistics,theorem,variable}
}

@article{strobl2019constraintbased,
  title    = {A Constraint-Based Algorithm for Causal Discovery with Cycles, Latent Variables and Selection Bias},
  author   = {Strobl, Eric V.},
  year     = 2019,
  month    = jul,
  journal  = {International Journal of Data Science and Analytics},
  volume   = {8},
  number   = {1},
  pages    = {33--56},
  issn     = {2364-4168},
  doi      = {10.1007/s41060-018-0158-2},
  url      = {https://doi.org/10.1007/s41060-018-0158-2},
  langid   = {english},
  keywords = {Artificial Intelligence,Causal discovery,Constraint,Cycles,Latent variables,Selection bias}
}

@article{uhler2013geometry,
  title     = {Geometry of the Faithfulness Assumption in Causal Inference},
  author    = {Uhler, Caroline and Raskutti, Garvesh and B{\"u}hlmann, Peter and Yu, Bin},
  year      = {2013},
  month     = apr,
  journal   = {The Annals of Statistics},
  volume    = {41},
  number    = {2},
  pages     = {436--463},
  publisher = {Institute of Mathematical Statistics},
  issn      = {0090-5364, 2168-8966},
  doi       = {10.1214/12-AOS1080},
  url       = {https://projecteuclid.org/journals/annals-of-statistics/volume-41/issue-2/Geometry-of-the-faithfulness-assumption-in-causal-inference/10.1214/12-AOS1080.full},
  keywords  = {(strong) faithfulness,14Q10,62H05,62H20,Algebraic statistics,Causal inference,Conditional independence,Crofton's formula,Directed acyclic graph,PC-algorithm,real algebraic hypersurface,structural equation model}
}

@incollection{verma1990causal,
  title     = {Causal {{Networks}}: {{Semantics}} and {{Expressiveness}}},
  booktitle = {Machine {{Intelligence}} and {{Pattern Recognition}}},
  author    = {Verma, Thomas and Pearl, Judea},
  editor    = {Shachter, Ross D. and Levitt, Tod S. and Kanal, Laveen N. and Lemmer, John F.},
  year      = {1990},
  month     = jan,
  series    = {Uncertainty in {{Artificial Intelligence}}},
  volume    = {9},
  pages     = {69--76},
  publisher = {North-Holland},
  doi       = {10.1016/B978-0-444-88650-7.50011-1},
  url       = {https://www.sciencedirect.com/science/article/pii/B9780444886507500111}
}

@unpublished{verma1993graphical,
  title  = {Graphical Aspects of Causal Models},
  author = {Verma, Thomas},
  note   = {UCLA Cognitive Systems Laboratory, Technical Report (R-191)},
  year   = {1993}
}

@inproceedings{yang2014mixed,
  title     = {Mixed {{Graphical Models}} via {{Exponential Families}}},
  booktitle = {Proceedings of the {{Seventeenth International Conference}} on {{Artificial Intelligence}} and {{Statistics}}},
  author    = {Yang, Eunho and Baker, Yulia and Ravikumar, Pradeep and Allen, Genevera and Liu, Zhandong},
  year      = {2014},
  month     = apr,
  pages     = {1042--1050},
  publisher = {PMLR},
  issn      = {1938-7228},
  url       = {https://proceedings.mlr.press/v33/yang14a.html},
  langid    = {english}
}

@inproceedings{zhang2002strong,
  title     = {Strong Faithfulness and Uniform Consistency in Causal Inference},
  booktitle = {Proceedings of the {{Nineteenth}} Conference on {{Uncertainty}} in {{Artificial Intelligence}}},
  author    = {Zhang, Jiji and Spirtes, Peter},
  year      = {2002},
  month     = aug,
  series    = {{{UAI}}'03},
  pages     = {632--639},
  publisher = {Morgan Kaufmann Publishers Inc.},
  address   = {San Francisco, CA, USA},
  isbn      = {978-0-12-705664-7}
}

@article{zhang2008detection,
  title     = {Detection of {{Unfaithfulness}} and {{Robust Causal Inference}}},
  author    = {Zhang, Jiji and Spirtes, Peter},
  year      = {2008},
  month     = jun,
  journal   = {Minds and Machines},
  volume    = {18},
  number    = {2},
  pages     = {239--271},
  issn      = {0924-6495, 1572-8641},
  doi       = {10.1007/s11023-008-9096-4},
  url       = {http://link.springer.com/10.1007/s11023-008-9096-4},
  copyright = {http://www.springer.com/tdm},
  langid    = {english}
}

@article{zhang2013comparison,
  title     = {A {{Comparison}} of {{Three Occam}}'s {{Razors}} for {{Markovian Causal Models}}},
  author    = {Zhang, Jiji},
  year      = 2013,
  month     = jun,
  journal   = {The British Journal for the Philosophy of Science},
  volume    = {64},
  number    = {2},
  pages     = {423--448},
  publisher = {The University of Chicago Press},
  issn      = {0007-0882},
  doi       = {10.1093/bjps/axs005},
  url       = {https://www.journals.uchicago.edu/doi/abs/10.1093/bjps/axs005}
}


\end{document}